\newcommand{\cH}{\mathcal{H}}
\numberwithin{equation}{section}
\numberwithin{figure}{section}
\theoremstyle{plain}
\newtheorem{thm}{Theorem}[section]
\newtheorem{theorem}[thm]{Theorem}
\newtheorem{prop}[thm]{Proposition}
\newtheorem{clm}[thm]{Claim}
\newtheorem{lemma}[thm]{Lemma}
\newtheorem{question}[thm]{Question}
\theoremstyle{definition}
\newtheorem{definition}[thm]{Definition}
\theoremstyle{remark}
\DeclarePairedDelimiter{\parens}{(}{)}
\DeclarePairedDelimiter{\set}{\{}{\}}
\title{Ramsey equivalence for asymmetric pairs of graphs}
\author{Simona Boyadzhiyska
	\and
	Dennis Clemens
	\and
	Pranshu Gupta
	\and
	Jonathan Rollin
}
\address{Institut f\"ur Mathematik, Freie Universit\"at Berlin, Berlin, Germany}
\email{s.boyadzhiyska@fu-berlin.de}
\thanks{The first author was supported by the Deutsche Forschungsgemeinschaft (DFG) Graduiertenkolleg ``Facets of Complexity'' (GRK 2434).}
\address{Hamburg University of Technology, Institute of Mathematics, Hamburg, Germany}
\email{dennis.clemens@tuhh.de, pranshu.gupta@tuhh.de}
\address{FernUniversität in Hagen, Fakult\"at f\"ur Mathematik und Informatik, Hagen, Germany}
\email{jonathan.rollin@fernuni-hagen.de}
\begin{document}
	\begin{abstract}
		A graph $F$ is Ramsey for a pair of graphs $(G,H)$ if any red/blue-coloring of the edges of $F$ yields a copy of $G$ with all edges colored red or a copy of $H$ with all edges colored blue.
		Two pairs of graphs are called Ramsey equivalent if they have the same collection of Ramsey graphs.
		The symmetric setting, that is, the case $G=H$, received considerable attention. This led to the open question whether there are connected graphs $G$ and $G'$ such that $(G,G)$ and $(G',G')$ are Ramsey equivalent.
		We make progress on the asymmetric version of this question and identify several non-trivial families of Ramsey equivalent pairs of connected graphs.
		
		Certain pairs of stars provide a first, albeit trivial, example of Ramsey equivalent pairs of connected graphs.
		Our first result characterizes all Ramsey equivalent pairs of stars. The rest of the paper focuses on pairs of the form $(T,K_t)$, where $T$ is a tree and $K_t$ is a complete graph. We show that, if $T$ belongs to a certain family of trees, including all non-trivial stars, then $(T,K_t)$ is Ramsey equivalent to a family of pairs of the form $(T,H)$, where $H$ is obtained from $K_t$ by attaching disjoint smaller cliques to some of its vertices. In addition, we establish that 
		for $(T,H)$ to be Ramsey equivalent to $(T,K_t)$,
		$H$ must have roughly this form. On the other hand, we prove that for many other trees $T$, including all odd-diameter trees, $(T,K_t)$ is not equivalent to any such pair, not even to the pair $(T, K_t\cdot K_2)$, where $K_t\cdot K_2$ is a complete graph $K_t$ with a single edge attached.
	\end{abstract}
	
	\maketitle 
	
	\section{Introduction}\label{sec:intro}
	
	\subsection{Ramsey equivalence}
	We say that a graph $F$ is \emph{Ramsey} for another graph $H$, if any red/blue-coloring of the edges of $F$ yields a copy of $H$ all of whose edges have the same color, that is, a \emph{monochromatic} copy of $H$; we write $\mathcal{R}(H)$ for the set of all Ramsey graphs for $H$. The seminal result of Ramsey~\cite{Ram30} establishes that $\mathcal{R}(H)\neq \emptyset$ for any graph $H$.
	Characterizing the graphs in $\mathcal{R}(H)$ exactly is a hard task accomplished for only few small graphs so far (see for example~\cite{BSS05,burr1976graphs}).
	A natural next problem then is
	to investigate the properties of the graphs belonging to $\mathcal{R}(H)$ for a given $H$.

	The most prominently studied property is the smallest number of vertices among all graphs in $\mathcal{R}(H)$ for a given $H$. This quantity is called the \emph{Ramsey number} of $H$. Ramsey numbers have proven to be notoriously hard to compute in many cases and have attracted a lot of attention over the years;
	see e.g. the survey~\cite{conlon2015recent} and the references therein.
	In the 1970s researchers initiated the study of other graph parameters in the context of Ramsey graphs, like the number of edges, the clique number, and the minimum degree (see for example~\cite{burr1976graphs,erdHos1978size,folkman1970graphs}). 
	A natural question to ask when studying problems of this type is: how do these values change when we modify the target graph $H$ slightly? More generally, how does the collection of Ramsey graphs change? This question motivated Szab\'o, Zumstein, and Z\"urcher~\cite{szabo2010minimum} to define the notion of Ramsey equivalence.
	
	\begin{definition}\rm
		Two graphs $H$ and $H'$ are \emph{Ramsey equivalent}, denoted $H\sim H'$, if $\mathcal{R}(H) = \mathcal{R}(H')$.
	\end{definition}
	
	It is not difficult to show that Ramsey equivalent pairs of graphs exist: for instance, the graph obtained by adding an isolated vertex to the clique $K_t$ for $t\geq 3$ is Ramsey equivalent to $K_t$. Szab\'o, Zumstein, and Z\"urcher~\cite{szabo2010minimum} found further examples of disconnected graphs that are Ramsey equivalent to the clique $K_t$ (see also~\cite{bloom2018ramsey,fox2014ramsey} for further results in this direction) and asked whether there exist any \emph{connected} such graphs.
	Perhaps surprisingly, several years later Fox, Grinshpun, Liebenau, Person, and Szab\'o~\cite{fox2014ramsey} settled this question
	in the negative: they showed that no connected graph is Ramsey equivalent to $K_t$.
	In light of this result, they raised the following question:
	
	\begin{question}[\cite{fox2014ramsey}]\label{quest:main}
		Is there a pair of non-isomorphic connected graphs that are Ramsey equivalent?
	\end{question}
	
	The above question is wide open.
	Not much is known even in the special case where the two graphs differ only by a pendent edge.
	It was shown by Clemens, Liebenau, and Reding~\cite{clemens2020minimal} that no pair of 3-connected graphs can be Ramsey equivalent. A result from Grinsphpun's PhD thesis~\cite[Lemma 2.6.3.]{grinshpun2015some} allows us to show non-equivalence for further pairs consisting of a graph $H$ and the graph $H$ with a pendent edge. Further evidence that the answer to Question~\ref{quest:main} might be negative is provided for example in~\cite{axenovich2017conditions,savery2022chromatic}.
	\medskip

	In this paper, we study Ramsey equivalence in the asymmetric setting and explore a variant of Question~\ref{quest:main}. We begin by defining the necessary notions. We say that a graph $F$ is \emph{Ramsey} for a pair of graphs $(G,H)$, and write $F\rightarrow (G,H)$, if, for every red/blue-coloring of $E(F)$ there exists a red copy of $G$, that is, a copy of $G$ with all edges colored red, or a blue copy of $H$, defined similarly.
	We denote the collection of all Ramsey graphs for $(G,H)$ by $\mathcal{R}(G,H)$. We call a graph $F\in \mathcal{R}(G,H)$ \emph{Ramsey-minimal} for $(G, H)$ if no proper subgraph of $F$ is Ramsey for $(G, H)$, and we denote the corresponding collection by $\mathcal{M}(G,H)$.
	
	\begin{definition}\rm
		We call two pairs of graphs $(G, H)$ and $(G', H')$ \emph{Ramsey equivalent}, denoted $(G,H)\sim (G', H')$, if $\mathcal{R}(G,H) = \mathcal{R}(G', H')$.
	\end{definition}
	
	Note that we can define Ramsey equivalence also in terms of the corresponding Ramsey-minimal graphs, requiring that $\mathcal{M}(G,H) = \mathcal{M}(G',H')$. Our goal is to explore the notion of Ramsey equivalence for asymmetric pairs of connected graphs and in particular the asymmetric version of Question~\ref{quest:main}.
	Previously known results allow us to exclude some potential candidates.
	Let $\omega(G)$ denote the clique number of a graph $G$, defined as
	the largest integer $n$ such that $K_n$ is a subgraph of~$G$.
	A famous result of Nešetřil and Rödl~\cite{NESETRIL1976243} establishes that, for every graph $G$, there is a Ramsey graph for $G$ that has the same clique number as $G$. Hence, the disjoint union of $G$ and $H$ has a Ramsey graph $F$ with clique number $\max\{\omega(G),\omega(H)\}$ and this graph $F$ is also a Ramsey graph for~$(G,H)$. This gives the following statement which we shall use several times in our proofs.
	
	\begin{theorem}[\cite{NESETRIL1976243}]\label{thm:NRcliquenumber}
		Each pair $(G,H)$ of graphs has a Ramsey graph with clique number equal to $\max\{\omega(G),\omega(H)\}$.
	\end{theorem}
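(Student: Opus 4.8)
The plan is to reduce the asymmetric statement to the symmetric (monochromatic) Ramsey theorem of Nešetřil and Rödl~\cite{NESETRIL1976243} applied to a single auxiliary graph, namely the disjoint union $J \coloneqq G \cup H$. First I would recall that~\cite{NESETRIL1976243} guarantees, for every graph $J$, the existence of a graph $F$ with $F \to J$ in the monochromatic sense and with $\omega(F) = \omega(J)$. Applying this with $J = G \cup H$ and observing that $\omega(G \cup H) = \max\{\omega(G), \omega(H)\}$, we obtain a graph $F$ with $\omega(F) = \max\{\omega(G), \omega(H)\}$ such that every red/blue-coloring of $E(F)$ contains a monochromatic copy of $G \cup H$.

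It then remains to verify that this $F$ is Ramsey for the pair $(G,H)$. Fix an arbitrary red/blue-coloring of $E(F)$ and take a monochromatic copy of $G \cup H$ guaranteed by the choice of $F$. If this copy is red, then in particular it contains a red copy of $G$; if it is blue, then it contains a blue copy of $H$. In either case the coloring yields a red copy of $G$ or a blue copy of $H$, so $F \in \mathcal{R}(G,H)$, and we have already ensured $\omega(F) = \max\{\omega(G),\omega(H)\}$, completing the proof. Since the entire argument rests on citing~\cite{NESETRIL1976243} as a black box, there is no real obstacle here; the only point to be careful about is precisely the direction of the implication in the previous step, i.e.\ that a monochromatic $G \cup H$ of \emph{either} color already delivers the required asymmetric outcome.
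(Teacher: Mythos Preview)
Your proposal is correct and follows exactly the same approach as the paper. The paper's justification (given in the paragraph preceding the theorem) is precisely to apply the symmetric Nešetřil--Rödl result to the disjoint union $G\cup H$, note that $\omega(G\cup H)=\max\{\omega(G),\omega(H)\}$, and observe that the resulting graph $F$ is also Ramsey for the pair $(G,H)$ --- your argument spells out this last step with the same case distinction on the color of the monochromatic copy of $G\cup H$.
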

	
	This result implies that, if $(G,H)\sim (G',H')$, then $\max\{\omega(G),\omega(H)\} = \max\{\omega(G'),\omega(H')\}$. As a second example, Savery~\cite[Section 3.1]{savery2022chromatic} proved that $(G,H)\not\sim(G',H')$ for all graphs $G$, $H$, $G'$, and $H'$ with $\chi(G)+\chi(H)\neq \chi(G')+\chi(H')$, where $\chi(G)$ denotes the chromatic number of $G$.
	
	It turns out, however, that the asymmetric version of Question~\ref{quest:main} has an affirmative answer. Let $K_{1,s}$ denote a star with $s$ edges.
	Through a simple application of Petersen's Theorem~\cite{petersen}, Burr, Erd\H{o}s, Faudree, Rousseau, and Schelp~\cite{burr1981starforests} showed that, for any odd integers $r$, $s\geq 1$, the only Ramsey-minimal graph for the pair of stars $(K_{1,r}, K_{1,s})$ is the star $K_{1,r+s-1}$. Thus, any odd integers $r,s,r',s'\geq 1$ with $r+s = r'+s'$ satisfy  $(K_{1,r}, K_{1,s}) \sim (K_{1,r'}, K_{1,s'})$. This example is perhaps not very satisfying, as pairs of odd stars have only a single Ramsey-minimal graph. It is then interesting to ask whether there are any Ramsey equivalent pairs of connected graphs with a larger, maybe even an infinite number of Ramsey-minimal graphs. Our main result shows that the answer is yes, exhibiting an infinite family of Ramsey equivalent pairs of connected graphs of the form $(T,K_t) \sim (T,K_t\cdot K_2)$, where $T$ is a certain kind of tree.
	
	\subsection{Results}

	In light of the discussion in the previous paragraph, one might ask whether there exist any other pairs of stars that are Ramsey equivalent. In our first result, we answer this question negatively. Note that $\mathcal{M}(K_{1,r}, K_{1,s})$ is infinite whenever $rs$ is even~\cite{burr1981starforests}.
	
	\begin{theorem}\label{thm:Stars}
		Let $a$, $b$, $x$, $y$ be positive integers with $\{a,b\} \neq \{x,y\}$. Then $(K_{1,a},K_{1,b})\sim(K_{1,x},K_{1,y})$ if and only if $a+b=x+y$ and $a$, $b$, $x$, and $y$ are odd.
	\end{theorem}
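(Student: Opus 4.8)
The \emph{if} direction is immediate from the theorem of Burr, Erd\H{o}s, Faudree, Rousseau and Schelp cited above: if $a,b,x,y$ are odd with $a+b=x+y$, then $\mathcal M(K_{1,a},K_{1,b})=\{K_{1,a+b-1}\}=\{K_{1,x+y-1}\}=\mathcal M(K_{1,x},K_{1,y})$, and since the Ramsey graphs of a pair are exactly the supergraphs of its Ramsey-minimal graphs, the pairs are Ramsey equivalent. For the \emph{only if} direction, assume $(K_{1,a},K_{1,b})\sim(K_{1,x},K_{1,y})$ with $\{a,b\}\ne\{x,y\}$. Every star $K_{1,m}$ with $m\ge a+b-1$ is Ramsey for $(K_{1,a},K_{1,b})$ while $K_{1,a+b-2}$ is not, and likewise with $(x,y)$ in place of $(a,b)$; hence $a+b-1\ge x+y-1$ and, symmetrically, $x+y\ge a+b$, so $a+b=x+y=:n+1$. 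Relabel so that $a\le b$, $x\le y$; interchanging the two pairs if needed, $\{a,b\}\ne\{x,y\}$ gives $a<x\le y<b$.

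Next I show $a,b,x,y$ are odd, by cases. Suppose first that one of the pairs consists of two odd numbers; by symmetry say $a,b$ are odd. If $a\ge2$ then $\mathcal M(K_{1,a},K_{1,b})=\{K_{1,n}\}$ is a singleton, so $\mathcal M(K_{1,x},K_{1,y})$ is too; by the results of Burr et al.\ a singleton Ramsey-minimal family occurs only when $x,y$ are both odd or $\min\{x,y\}=1$, and in the latter case the unique minimal graph $K_{1,\max\{x,y\}}=K_{1,n}$ forces $\{x,y\}=\{1,n\}$, two odd numbers since $n$ is odd. If $a=1$, then $\mathcal M(K_{1,1},K_{1,b})=\{K_{1,b}\}$ is a singleton, so $\mathcal M(K_{1,x},K_{1,y})$ is one too, and as $\{x,y\}\ne\{1,b\}$ this forces $x,y$ odd, whence $b=x+y-1$ is odd. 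In all these cases $a,b,x,y$ are odd, as claimed. It remains to handle the case that each of $\{a,b\}$, $\{x,y\}$ has an even entry and $a\ge2$; here I derive a contradiction by producing a graph Ramsey for exactly one of the two pairs, so this case cannot occur.

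In this case I use $(n-1)$-regular host graphs. If $F$ is $(n-1)$-regular and a red/blue-colouring of $E(F)$ has no red $K_{1,p}$ and no blue $K_{1,q}$ with $p+q=n+1$, then at every vertex the red degree is $\le p-1$, the blue degree is $\le q-1$, and these sum to $n-1=(p-1)+(q-1)$, so the red degree is exactly $p-1$ everywhere; hence $F\to(K_{1,p},K_{1,q})$ iff $F$ has no $(p-1)$-factor. (In an $(n-1)$-regular graph a $(p-1)$-factor and a $(q-1)$-factor are complementary, so having one is equivalent to having the other.) Pick odd numbers $d\in\{a-1,b-1\}$ and $d'\in\{x-1,y-1\}$ — possible as each pair has an even entry, choosing the smaller option when both entries of a pair are even — and note $d\ne d'$ (otherwise $\{a,b\}$ and $\{x,y\}$ would share the element $d+1$ and, having equal sums, be equal) and $d+d'\ne n-1$ (by parity if $n$ is even; since $x\ne b$ if $n$ is odd). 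Let $k<\ell$ be $d,d'$ in increasing order; one checks $1\le k<\ell\le n-3$. Let $(K_{1,p},K_{1,q})$ be the pair with $k\in\{p-1,q-1\}$ and $(K_{1,p'},K_{1,q'})$ the other pair, so $\ell\in\{p'-1,q'-1\}$. It suffices to construct an $(n-1)$-regular graph $F$ with an $\ell$-factor but no $k$-factor: such an $F$ has a $(p'-1)$-factor, so $F\not\to(K_{1,p'},K_{1,q'})$, but no $(p-1)$-factor, so $F\to(K_{1,p},K_{1,q})$, contradicting $\mathcal R(K_{1,a},K_{1,b})=\mathcal R(K_{1,x},K_{1,y})$.

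To construct $F$, take a vertex $s$ joined to $k+2$ pairwise disjoint blobs, each an almost $\ell$-regular graph on a common, sufficiently large odd number of vertices, attached to $s$ by a single edge; when $\ell>k+2$, add one more blob, of even order, joined to $s$ by the remaining $\ell-k-2$ edges. Choosing the internal degree sequences suitably makes the resulting graph $R$ exactly $\ell$-regular, and the handshake conditions inside the blobs force the number of odd-order components of $R-s$ to be odd, hence exactly $k+2$. In any $k$-factor of $R$, each odd-order component of $R-s$ must send at least one edge to $s$ (by Tutte's $k$-factor theorem, or an elementary parity count), so $o(R-s)\le k$; since $k+2>k$, $R$ has no $k$-factor. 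Finally pad $R$ to an $(n-1)$-regular graph $F$ by adding an edge-disjoint $(n-1-\ell)$-regular graph all of whose edges lie inside a single blob or join a blob to $s$; then $R-s$ and $F-s$ have the same components, so $F$ still has no $k$-factor, while $R$ remains an $\ell$-factor of $F$. The main obstacle is exactly this construction: one must verify that, with the blobs taken large enough, all the required almost-regular degree sequences inside a blob decompose as the edge-disjoint union of the $R$-part and the padding, and that the parities of the numbers of $s$–blob edges can be arranged simultaneously — a finite but somewhat delicate piece of bookkeeping. Everything else (the reductions, the characterization of $\mathcal R$ for regular host graphs, and the numerical checks $k<\ell\le n-3$) is routine.
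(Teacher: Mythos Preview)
Your strategy matches the paper's: show $a+b=x+y$ via stars, then in the remaining cases produce an $(n-1)$-regular graph admitting one prescribed odd factor but not another, using that for $(n-1)$-regular $F$ one has $F\to(K_{1,p},K_{1,q})$ iff $F$ has no $(p-1)$-factor. The case organisation differs cosmetically --- the paper takes $a$ to be the largest even entry and splits on the parities of $xy$ and $b$, rather than ordering $a<x\le y<b$ and first disposing of the all-odd sub-case via the singleton characterisation of $\mathcal M$. One small omission in your split: ``one pair all odd'' followed by ``each pair has an even entry and $a\ge 2$'' misses the case $a=1$ with $b$ even (neither pair all odd). This is easily patched by the same singleton-$\mathcal M$ argument you already use.

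The substantive gap is the construction you correctly flag as ``the main obstacle'' and leave as a sketch. The paper carries it out explicitly as Lemma~\ref{lem:regularFactors}: for odd $p<q$ (with $q\le r$ if $r$ is odd, $q\le r/2$ if $r$ is even --- and one checks your $k,\ell$ always satisfy this) it builds an $r$-regular graph with a $q$-factor and no $p$-factor. The gadget is an odd-order graph $H$ obtained from an $r$-regular $G$ --- itself assembled from copies of $K_{q+1}$ linked by perfect matchings, so that a $q$-factor $G_q$ is already visible --- by splitting a carefully chosen matching $M_G$ (meeting $G_q$ in exactly $(q-1)/2$ edges) through a new vertex $u$; this yields $H$ together with an explicit almost-$q$-regular spanning subgraph $H_q$ in one stroke. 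Copies of $H$ are then hung off the vertices of a clique $D$ (of size $r-q+1$, respectively $r-2q+1$), giving an $r$-regular graph with a global $q$-factor, while Belck's corollary of Tutte's factor theorem (Theorem~\ref{thm:Tutte-pair}) applied to $D$ --- each copy of $H$ being an odd component of $F-D$ --- rules out a $p$-factor. This is close in spirit to your single-cut-vertex sketch, but by building the $q$-factor into the gadget from the start the paper sidesteps entirely the edge-disjoint realisation of the $R$-part and the padding that you would still need to verify.
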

	
	Note that each pair of stars has a Ramsey graph that is a star.
	Since stars cannot be Ramsey graphs for other connected graphs than (smaller) stars, pairs of stars are not Ramsey equivalent to pairs of connected graphs that are not star pairs.
	
	\smallskip
	
	We next study Ramsey equivalence for pairs of the form $(T, K_t)$, where $T$ is a tree and $t\geq 3$.
	Note that in the case where $T$ is a single vertex or edge the collection of Ramsey graphs is trivial, as $\mathcal{M}(K_1,K_t)=\{K_1\}$ and $\mathcal{M}(K_2,K_t)=\{K_t\}$.
	From now on, unless otherwise specified, we will assume that $T$ has at least two edges. It was shown by {\L}uczak~\cite{luczak1994ramsey} that in this case $\mathcal{M}(T,K_t)$ is infinite.
	Perhaps surprisingly, we find non-trivial Ramsey equivalent pairs in this setting. To describe some of those pairs, we need the following definitions.
	For integers $a\geq 1$, $b\geq 2$, and $t\geq 3$ with $a\leq t$, let $K_t\cdot aK_b$ denote the graph consisting of a copy of $K_t$ and $a$ pairwise vertex-disjoint copies of $K_b$, each sharing exactly one vertex with the copy of $K_t$ (see Figure~\ref{fig:3suitableCaterpillar} left for an example).
	We call a tree $T$ an \emph{($s$-)suitable caterpillar}, if $T$ consists of a path $P$ on three vertices and up to $3s-1$ further vertices of degree 1  such that the endpoints of $P$ are of degree exactly $s+1$ in $T$ and the middle vertex of $P$ is of degree at most $s+1$ in $T$ (see Figure~\ref{fig:3suitableCaterpillar} right).
	
	\begin{figure}
		\centering
		\includegraphics{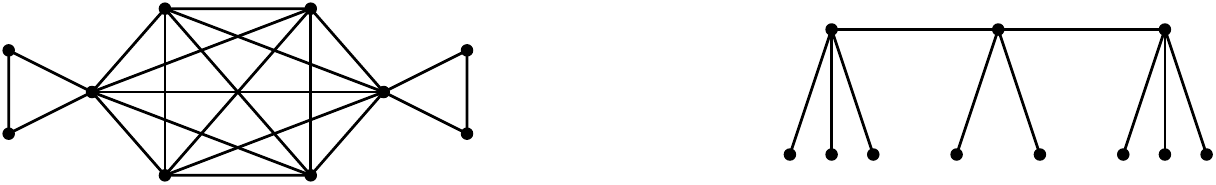}
		\caption{The graph $K_6\cdot 2K_3$ (left) and the largest $3$-suitable caterpillar (right).}
		\label{fig:3suitableCaterpillar}
	\end{figure}
	
	\begin{theorem}\label{thm:equivalentpairs}\hfill
		\begin{enumerate}[label = (\alph*)]
			\item For all integers $s\geq 2$ and $t\geq 3$, we have $(K_{1,s},K_t)\sim(K_{1,s},K_t\cdot K_2)$.\label{thm:StarClique}
			\item Let $a\geq 1$ and $b\geq 2$ be integers, and let $T$ be a star with at least two edges or a suitable caterpillar.
			For any sufficiently large $t$, we have $(T,K_t)\sim (T,K_t\cdot aK_b)$.\label{thm:StarCaterpillar}
		\end{enumerate}
	\end{theorem}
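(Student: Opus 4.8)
The inclusion $\mathcal{R}(T,K_t\cdot aK_b)\subseteq\mathcal{R}(T,K_t)$ (and, for part~\ref{thm:StarClique}, $\mathcal{R}(K_{1,s},K_t\cdot K_2)\subseteq\mathcal{R}(K_{1,s},K_t)$) is immediate, since $K_t$ is a subgraph of $K_t\cdot aK_b$ and hence any blue copy of $K_t\cdot aK_b$ contains a blue copy of $K_t$. The substance is the reverse inclusion: $F\to(T,K_t)$ implies $F\to(T,K_t\cdot aK_b)$. I would argue by contradiction. Suppose $F\to(T,K_t)$ and $c$ is a colouring of $E(F)$ with no red $T$ and no blue $K_t\cdot aK_b$; since $F\to(T,K_t)$, the colouring $c$ must contain a blue $K_t$. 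I would then modify $c$ into a colouring $c'$ that still has no red $T$ but contains no blue $K_t$ at all, contradicting $F\to(T,K_t)$. The modification only recolours some blue edges red (plus, when $T$ is a star, a few red edges blue in compensation), chosen so that every blue $K_t$ of $c$ is destroyed and neither a red $T$ nor a new blue $K_t$ is created.

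The absence of a blue $K_t\cdot aK_b$ constrains the blue cliques. Since $K_t\cdot aK_b$ is a subgraph of $K_{t+a(b-1)}$, the blue clique number of $c$ is at most $t+a(b-1)-1$. For part~\ref{thm:StarClique} one gets more: a blue edge leaving a blue $K_t$ would complete a blue $K_t\cdot K_2$, so every blue $K_t$ is a connected component of the blue graph isomorphic to $K_t$; thus the blue graph is a disjoint union $S_1\sqcup\cdots\sqcup S_m\sqcup B_0$ with each $S_i\cong K_t$ and $B_0$ being $K_t$-free. The recolouring for part~\ref{thm:StarClique} is then local: for each $S_i$, recolour one edge $u_iv_i$ of $S_i$ red, and --- since $c$ has no red $K_{1,s}$, so all red degrees are at most $s-1$ --- if this would push the red degree of $u_i$ or $v_i$ to $s$, first recolour one of its red edges (necessarily leaving $S_i$, and existing since $s\ge 2$) blue. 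One checks that red degrees stay at most $s-1$ (no red $K_{1,s}$); that each new blue edge is incident to some $u_i$ or $v_i$, whose new blue neighbourhood has exactly $t-1$ vertices and spans no $K_t$ because the new neighbour is not blue-adjacent to $S_i\setminus\{u_i,v_i\}$ and $u_iv_i$ is now red (no new blue $K_t$); and that all old blue $K_t$'s, namely the $S_i$, are destroyed. This settles part~\ref{thm:StarClique}.

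For part~\ref{thm:StarCaterpillar} the scheme is the same but more delicate. Going through the maximal blue cliques of $c$, within one on $k\le t+a(b-1)-1$ vertices I would recolour red a matching of $k-(t-1)\le a(b-1)$ edges, leaving on those vertices the graph $K_k$ minus a matching, which has clique number $t-1$, destroying its blue $K_t$'s. Recolouring blue edges red cannot create a blue copy, so what remains is: (i) that all blue $K_t$'s of $c$ are destroyed, also those spanning the inside and outside of a clique and also in the presence of overlaps between the maximal blue cliques; (ii) that no red $T$ is created; and, for $T=K_{1,s}$, that red degrees can be kept at most $s-1$. In the star case the matchings should avoid the (few, because $t$ is large and $k-(t-1)$ is bounded) vertices of red degree $s-1$; if such a vertex must be used, one of its outgoing red edges is recoloured blue, which is harmless since the new blue neighbour attaches to the modified clique only in a pendant-like way. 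For $T$ a suitable caterpillar the red degrees need not be bounded, and the genuinely new difficulty is (ii): recolouring clique edges red could build a red $T$. This is where the \emph{suitable} hypothesis enters --- a red copy of $T$ requires two vertices of red degree at least $s+1$, the images of the spine endpoints $p_1$ and $p_3$, at red-distance $2$, together with $2s$ further private red neighbours forming their brooms; hence ``no red $T$'' forces these ``heavy'' vertices to be few and pairwise far apart, and the recoloured matchings can be chosen so that no such configuration is completed. Once all of this is established, $c'$ has no red $T$ and no blue $K_t$, contradicting $F\to(T,K_t)$, which proves the theorem.

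The hardest step should be point (ii) for suitable caterpillars: extracting, from ``no red $T$'', enough control over the red graph near the blue cliques to guarantee that the recolourings never produce a red $T$. The parameters in the definition of a suitable caterpillar --- a spine of length three, and brooms of sizes $s$, $s$, and at most $s-1$ --- are exactly what makes this work; for other trees it does not, consistently with the negative results announced in the introduction. A secondary, purely technical, point is coordinating the recolourings near overlapping maximal blue cliques and, in the star case, ensuring that the compensating red-to-blue recolourings near different cliques do not together create a blue $K_t$; this is feasible because $\omega-t+1\le a(b-1)$ is bounded while $t$ may be taken large.
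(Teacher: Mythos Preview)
Your overall strategy---start from a $(T,K_t\cdot aK_b)$-free colouring and recolour to kill all blue $K_t$'s---is the right one, and the structural observation for part~\ref{thm:StarClique} (blue $K_t$'s are isolated blue components) is correct. But the execution for~\ref{thm:StarClique} has a real gap, and for~\ref{thm:StarCaterpillar} the sketch stops well short of a proof.

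\textbf{Part~\ref{thm:StarClique}: the compensation step can create a blue $K_t$.} Your assertion that the new blue neighbourhood of $u_i$ ``has exactly $t-1$ vertices'' is false: $u_i$ can also gain blue neighbours by being the \emph{target} of another vertex's compensation. Concretely, take $s=3$, $t=3$, blue triangles $S_1=\{a,b,c\}$ and $S_2=\{p,q,r\}$, and red edges $ap,\,aq,\,pp',\,rr',\,rr''$ (with $p',r',r''$ isolated otherwise). Choose $u_1v_1=ab$ and $u_2v_2=pr$. Then $a$ has red degree $2=s-1$ and may compensate with $aq$; $p$ has red degree $2$ and may compensate with $pa$. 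After these choices $ap$, $aq$, and $pq$ are all blue, so $\{a,p,q\}$ is a new blue $K_3$. Good choices do exist here (e.g.\ take $u_2v_2=pq$, or let $a$ compensate with $ap$), but your argument gives no mechanism for coordinating the choices across cliques, and the sentence ``one checks that \ldots'' does not cover this. The paper sidesteps the coordination problem entirely: it takes a $(K_{1,s},K_t\cdot K_2)$-free colouring \emph{minimising} the number of blue $K_t$'s, and then, assuming one exists, builds a single red/blue alternating walk through blue cliques and switches colours along it. One then checks that this preserves ``no red $K_{1,s}$'' and ``no blue $K_t\cdot K_2$'' while strictly decreasing the number of blue $K_t$'s---a local extremal argument rather than a global one-shot recolouring.

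\textbf{Part~\ref{thm:StarCaterpillar}: the hard step is not carried out.} You correctly locate the difficulty (ensuring the recoloured matching creates no red $T$) and correctly tie it to the ``suitable'' hypothesis, but the paragraph offered is a description of what needs to be true, not a proof. The paper packages this step into a reusable notion: a graph $G$ is \emph{$k$-woven} if whenever an edge $uv$ lies in every copy of $G$ in $F$, one can delete at most $k$ edges at $u$ and $k$ at $v$ (other than $uv$) to destroy all copies of $G$. A general proposition then shows $(G,K_t)\sim(G,K_t\cdot aK_b)$ for $k$-woven $G$ and large $t$ by: (i) selecting a maximal family $\mathcal{B}$ of blue $K_t$'s with pairwise small overlap; (ii) recolouring a large matching inside each to red; (iii) using the woven property at each matching edge to recolour at most $2k$ incident red edges blue, thereby killing every red $G$ created; and (iv) a counting argument (exploiting $t$ large) to show no blue $K_t$ survives. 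The second half of the proof verifies that stars are $1$-woven and $s$-suitable caterpillars are $2(s+1)^2$-woven; the latter is where the specific shape of $T$ is actually used, via a pigeonhole argument on high-degree neighbours. Your sketch contains neither the woven abstraction nor a substitute for it.
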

	
	Observe that the first part of the above theorem holds for each $t\geq 3$, while we need a sufficiently large $t$ to prove the second part.
	Our proof shows that we can take $t$ to be quadratic in the parameters $a$, $b$ and $\lvert V(T)\rvert$ (see Proposition~\ref{prop:woven}, where we use the fact that the Ramsey number of $(T,K_t)$ is $(\lvert V(T)\rvert-1)(t-1)+1$, as shown by Chvátal~\cite{TreeCompleteRamseyNumber}). Since we do not have a non-trivial lower bound, we make no effort to optimize this value, choosing to present a simpler proof instead.
	\smallskip
	
	We complement the equivalence result above by proving Ramsey non-equivalence for several other families of pairs of trees and cliques. Theorem~\ref{thm:NRcliquenumber} shows that we may restrict our attention to pairs $(G,H)$ with $\max\set{\omega(G),\omega(H)} = t$, since otherwise $(G,H)\not\sim (T,K_t)$.
	Before we state the result, we again need some definitions.
	The length of a path is its number of edges.
	The diameter $\mathrm{diam}(T)$ of a tree $T$ is the length of its longest path.
	Trees of even diameter contain a unique central vertex, that is, a vertex that is the middle vertex in each longest path.
	Let $\mathcal{T}$ denote the class of all trees $T$ of diameter at least three such that:
	\begin{itemize}
		\item if $\mathrm{diam}(T)$ is even, the neighbors of the central vertex of $T$ are of degree at most two, and
		\item if $\mathrm{diam}(T)=4$, the central vertex is  of degree at least $3$.
	\end{itemize}
	Note in particular that this class contains all trees of odd diameter.
	
	Theorem~\ref{thm:equivalentpairs} above shows that for some trees $T$ certain modifications to the pair $(T,K_t)$ yield a Ramsey equivalent pair.
	Specifically, the Ramsey graphs do not change when we attach certain disjoint pendent graphs to the second component of the pair, that is, at the vertices of $K_t$.
	The first part of the following theorem states that this behavior does not generalize to trees from the family $\mathcal{T}$ defined above in a strong sense: for each tree $T\in\mathcal{T}$ and each $t\geq 3$, we have $(T,K_t)\not\sim (T,K_t\cdot K_2)$.
	The second part shows that in order to obtain a Ramsey equivalent pair (like in Theorem~\ref{thm:equivalentpairs}\ref{thm:StarCaterpillar}) it is necessary that the graphs attached to different vertices of $K_t$ do not intersect.
	Finally, we consider modifications to the first component of the pair, namely $T$.
	If $T$ and $T'$ are trees of different sizes, we have $(T,K_t)\not\sim (T',K_t)$, since the Ramsey numbers of these pairs differ, as shown by Chvátal~\cite{TreeCompleteRamseyNumber}.
	The third part of the theorem below shows that $T$ cannot be replaced by \emph{any} other connected graph~$G$ if the second component of the pair stays unchanged. Note that the Ramsey number alone is not sufficient to distinguish certain pairs $(T, K_t)$ and $(G, K_t)$: for example, Keevash, Long, and Skokan~\cite{keevash2021cycle} showed that when $\ell =\Omega\parens*{\frac{\log t}{\log\log t}}$ the Ramsey numbers of $(C_\ell, K_t)$ and $(T_\ell, K_t)$ are the same, where $C_\ell$ and $T_\ell$ denote a cycle and a tree on $\ell$ vertices, respectively.
	
	\begin{theorem}\label{thm:non-equivalentpairs}
		Let $T$ be a tree, $t\geq 3$ be an integer, and let $G$ and $H$ be graphs with $(G,H)\neq(T,K_t)$. Then $(T,K_t)\not\sim (G,H)$ if one of the following conditions holds:
		\begin{enumerate}[label = (\alph*)]
			\item  $G=T$, $T\in\mathcal{T}$, and $H$ is connected,\label{thm:diameterTrees}
			
			\item $H$ contains a copy $K$ of $K_t$, and $H$ contains a cycle with vertices from both $V(K)$ and $V(H)\setminus V(K)$,\label{thm:noexternalcycle}
			
			\item $G\neq T$, $G$ is connected, and $H=K_t$.\label{thm:connectedG}
		\end{enumerate}
	\end{theorem}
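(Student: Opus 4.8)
The plan is, in each case, to suppose $(T,K_t)\sim(G,H)$ and reach a contradiction by producing a graph lying in exactly one of $\mathcal R(T,K_t)$ and $\mathcal R(G,H)$ — equivalently, a Ramsey graph for one pair carrying a red/blue colouring that witnesses it is not Ramsey for the other. It helps to first record what $(T,K_t)\sim(G,H)$ forces. By Theorem~\ref{thm:NRcliquenumber}, $\max\{\omega(G),\omega(H)\}=\max\{\omega(T),\omega(K_t)\}=t$; by Savery's result, $\chi(G)+\chi(H)=\chi(T)+\chi(K_t)=t+2$; and since $\mathcal R$ determines the Ramsey number, Chvátal's theorem gives $R(G,H)=R(T,K_t)=(\lvert V(T)\rvert-1)(t-1)+1$. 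I will also use that $\mathcal M(T,K_t)$ consists of connected graphs (as $T$ and $K_t$ are connected), and that the all-blue colouring shows every $F\in\mathcal R(T,K_t)$ contains a copy of $K_t$. In particular $\chi(H)\ge\omega(H)$ and $\chi(G)\ge 1$, so $\chi(G)\le 2$ throughout; and if $G$ were disconnected, $\mathcal M(G,H)$ would contain a disconnected graph, so we may reduce to $G$ connected (and hence, being non-edgeless, $\chi(G)=2$).

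\emph{Part (a).} Here $G=T$ and $H\neq K_t$ is connected. The constraints give $\omega(H)=\chi(H)=t$, so $K_t\subseteq H$, and a shortest path from $V(H)\setminus V(K_t)$ to this copy of $K_t$ shows $K_t\cdot K_2\subseteq H$. Thus $\mathcal R(T,H)\subseteq\mathcal R(T,K_t\cdot K_2)\subseteq\mathcal R(T,K_t)$, so it is enough to show $\mathcal R(T,K_t\cdot K_2)\subsetneq\mathcal R(T,K_t)$: I would construct $F\to(T,K_t)$ admitting a red/blue colouring with no red $T$ in which every blue copy of $K_t$ is a connected component of the blue subgraph, so that there is no blue $K_t\cdot K_2$ and $F\not\to(T,K_t\cdot K_2)$. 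I plan to build $F$ from a small graph that is Ramsey for $(T,K_t)$, attaching signal-sender--type gadgets that force a colouring in which a designated copy of $K_t$ is entirely blue while all edges joining it to the rest of $F$ are red. The hypothesis $T\in\mathcal T$ enters exactly here: the red subgraph produced this way is, up to the gadgets, a bounded number of short pendant paths rooted at the vertices of that $K_t$, and the three clauses in the definition of $\mathcal T$ (odd diameter; neighbours of the central vertex of degree at most two; the diameter-$4$ exception) are precisely what guarantee such a red subgraph contains no copy of $T$. The main obstacle is the balancing act of keeping $F$ sparse enough for this isolating colouring to exist while still making $F$ Ramsey for $(T,K_t)$.

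\emph{Part (b).} Now $G$ is otherwise unrestricted, $K_t\subseteq H$, and $H$ contains a cycle $C$ running through two vertices of the copy $K$ of $K_t$ and through a vertex outside $V(K)$; note the graphs $K_t\cdot aK_b$ of Theorem~\ref{thm:equivalentpairs}\ref{thm:StarCaterpillar} contain no such cycle, so this does not clash. Since $\mathcal M(G,H)=\mathcal M(T,K_t)$ is infinite, $G\notin\{K_1,K_2\}$, so $G$ is a connected graph on at least three vertices and hence contains a vertex of degree at least two; in particular $G$ is not a subgraph of a matching. I then plan to produce $F\to(T,K_t)$ together with a matching $M$ of $F$ such that the blue subgraph $B=F-M$ has its copies of $K_t$ pairwise vertex-disjoint, with every cycle of $B$ of length at most $\lvert V(H)\rvert$ contained in one of them. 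Colouring $M$ red and the rest blue then has no red $G$ (as $G$ is not a subgraph of a matching) and no blue $H$ (in a copy of $H$ in $B$, the cycle $C$ would be a short blue cycle meeting a blue copy of $K_t$ in two vertices yet leaving it), so $F\not\to(G,H)$. For $T=K_{1,2}$ and small $H$ one can take $F$ to be a wheel (a cycle plus a dominating vertex) with $M$ a matching chosen so that $B$ has a single triangle; for general $T$ and $H$ one uses a larger clique gadget whose non-clique part has girth exceeding $\lvert V(H)\rvert$. The main difficulty is producing such a gadget uniformly over all $H$ satisfying condition (b), whose straddling cycle may be long.

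\emph{Part (c).} Here $H=K_t$, $G$ is connected, and $G\neq T$; thus $G$ is bipartite, and $R(G,K_t)=(\lvert V(T)\rvert-1)(t-1)+1$. Using $R(G,K_t)\ge(\lvert V(G)\rvert-1)(t-1)+1$ for connected $G$, we get $\lvert V(G)\rvert\le\lvert V(T)\rvert$, and if $G$ is a tree then Chvátal's formula forces $\lvert V(G)\rvert=\lvert V(T)\rvert$. I would split into two cases. If $G$ is not a forest, I plan to find $F\to(T,K_t)$ with a colouring whose red subgraph is a forest and whose blue subgraph is $K_t$-free; since $F\to(T,K_t)$ the red forest must contain a copy of $T$, but being a forest it contains no copy of the cyclic graph $G$, so $F\not\to(G,K_t)$. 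If $G$ is a tree on $\lvert V(T)\rvert$ vertices with $G\neq T$, I would instead use a colouring of such an $F$ whose red subgraph is a vertex-disjoint union of copies of $T$ — again forced to contain $T$, but, being a disjoint union of trees on $\lvert V(T)\rvert$ vertices, containing no other connected graph on that many vertices, in particular no $G$ — and whose blue subgraph is $K_t$-free. In both cases the work is to exhibit an $F$ that is Ramsey for $(T,K_t)$ and for which the required structured red subgraph with $K_t$-free complement exists; as in part (a) this would be done via a signal-sender construction of a Ramsey(-minimal) graph for $(T,K_t)$ whose clique gadgets admit the prescribed colour pattern, and this is the main obstacle.
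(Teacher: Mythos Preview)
Your outline for part~(a) is the paper's approach: build $F\to(T,K_t)$ admitting a $(T,K_t\cdot K_2)$-free colouring in which a single blue $K_t$ has only red edges leaving it. The paper realises this with explicit gadgets $\Lambda_r$, obtained from a $K_t$-free Ramsey graph $\Gamma$ for $(T,K_{t-1})$ by hanging $d$ disjoint copies of $\Gamma$ below each non-leaf of a depth-$r$ $d$-ary tree; one roots a copy of $\Lambda_r$ at each vertex of the clique (with an extra gadget, built from a vertex-Ramsey graph, in the even-diameter case). Your picture of the red subgraph as ``short pendant paths'' is too thin --- one genuinely needs full branching trees --- but your reading of where the $\mathcal T$ hypothesis enters is accurate.

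For part~(b) you are working much too hard, and one step is not valid: the assertion that a disconnected $G$ forces a disconnected member of $\mathcal M(G,H)$ fails in general (e.g.\ $G=K_1+K_2$, $H$ connected with $\lvert V(H)\rvert\ge 3$ gives $\mathcal M(G,H)=\{H\}$), and without it your ``$G$ not contained in a matching'' conclusion is unsupported. The paper sidesteps $G$ entirely: it builds $F$ containing \emph{no copy of $H$ at all}, so $F\not\to(G,H)$ trivially. Take a $t$-uniform hypergraph $\mathcal H$ of girth exceeding the length of the straddling cycle in $H$ and minimum degree at least $t\lvert E(T)\rvert$ (Erd\H os--Hajnal gives high girth and high chromatic number, hence a subhypergraph of high minimum degree), and put a $K_t$ on each hyperedge. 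High girth confines every short cycle of $F$ to a single hyperedge, so $H\not\subseteq F$; high minimum degree ensures that any colouring without a blue $K_t$ has enough red edges to yield a red subgraph of minimum degree $\lvert E(T)\rvert$, hence a red $T$.

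For part~(c) you are missing the key idea. Your plan requires, in the second case, an $F\to(T,K_t)$ whose edges decompose into a vertex-disjoint union of copies of $T$ and a $K_t$-free graph; you give no construction, and none is apparent. The paper instead \emph{bootstraps from the equivalence hypothesis}. Glue a well-behaved $(T,K_t,\beta)$-determiner $D$ (from Burr--Erd\H os--Faudree--Rousseau--Schelp) along each edge of a fixed copy $T_0$ of $T$; call the result $F$. Then $F\to(T,K_t)$ since every $\beta$ is forced red. Removing one edge $e_0\in E(T_0)$ gives $F-e_0\not\to(T,K_t)$ via the well-behaved colouring, and the assumed equivalence yields a $(G,K_t)$-free colouring $c$ of $F-e_0$. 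Colour $e_0$ blue; if a blue $K_t$ appears it is the unique one on $e_0$ and its neighbourhood inside $D_{e_0}$. Now transplant the colouring of $D_{e_0}-e_0$ under $c$ onto every other $D_e-\beta$ and make all of $T_0$ red: no blue $K_t$ remains, and since (after swapping $T$ and $G$ if necessary) $G\not\subseteq T$ and every edge leaving $T_0$ is blue, there is no red $G$. This determiner-and-transplant manoeuvre, using the equivalence to manufacture the needed colouring rather than building it by hand, is what your proposal lacks.
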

	
	As we will see in Section~\ref{sec:nonequivalent}, our construction actually allows us to prove the statement from the first part of the theorem above for a larger class of trees. Again, since our results do not lead to a complete characterization of those trees $T$ for which $(T, K_t)\sim (T, K_t\cdot K_2)$, we choose to state the simpler, albeit somewhat weaker, result here. 
	As a specific example, note that our results imply that for sufficiently large $t$ and a path $P$  we have $(P,K_t)\sim (P,K_t\cdot K_2)$ if and only if $P$ has two or four edges.
	
	In this paper, we study what pairs of connected graphs $(G,H)$ can be Ramsey equivalent to pairs of the form $(T,K_t)$.
	A summary of our results is given in Table~\ref{tab:summary}.	
	We focus on the two cases $G=T$ and $H=K_t$. It would be interesting to know whether there are any pairs $(G,H)$ of connected graphs with $G\subsetneq T$ and $K_t\subsetneq H$ that are Ramsey equivalent to $(T, K_t)$.

	\begin{table}
		\begin{tabular}{r|l|l|l|l}			
			\diagbox[width=3em]{$G$}{$H$} & \multicolumn{1}{c|}{$\omega(H)\neq t$} & \multicolumn{1}{c|}{$H=K_t$} & \multicolumn{1}{c|}{\renewcommand{\arraystretch}{0.75}\begin{tabular}[c]{@{}c@{}}\tiny $H\supsetneq K_T$ and\\ \tiny$H\text{-}E(K_t)$ has\\\tiny $t$ components\end{tabular}} & \multicolumn{1}{c}{\renewcommand{\arraystretch}{0.75}\begin{tabular}[c]{@{}c@{}}\tiny $H\supsetneq K_T$ and\\\tiny $H\text{-}E(K_t)$ has\\\tiny$<t$ components\end{tabular}}\\
			\hline
			$G=T$ & $\not\sim$ (\cite{NESETRIL1976243}) & $\sim$ (triv.) & \renewcommand{\arraystretch}{0.75}\begin{tabular}[c]{@{}r@{~}l@{}} $\sim$ & for some $H$ if $T$ star or suit.\ cat.\ (\ref{thm:equivalentpairs})\\
				$\not\sim$ &if $T\in\mathcal{T}$ (\ref{thm:non-equivalentpairs}\ref{thm:diameterTrees}) \\ \multicolumn{2}{l}{otherwise partial results}\end{tabular} & $\not\sim$ (\ref{thm:non-equivalentpairs}\ref{thm:noexternalcycle}) \\
			\hline
			$G\neq T$ & $\not\sim$ (\cite{NESETRIL1976243}) & $\not\sim$ (\ref{thm:non-equivalentpairs}\ref{thm:connectedG}) & \multicolumn{1}{c|}{open} & $\not\sim$ (\ref{thm:non-equivalentpairs}\ref{thm:noexternalcycle})
		\end{tabular}
		\vspace{4mm}
		\caption{
			Known results about Ramsey equivalence between $(T,K_t)$ and $(G,H)$ where $t\geq 3$, $T$ is a tree, $G$ and $H$ are connected graphs, and $\omega(G)\leq \omega (H)$. Each entry states whether the respective pairs $(T,K_t)$ and $(G,H)$ are Ramsey equivalent or not.}		
		\label{tab:summary}
	\end{table}

	\subsection{Notation}
	
	Given a graph $G$, we denote its vertex set and its edge set by $V(G)$ and $E(G)$, respectively.
	For a set $X\subseteq V(G)$  we write $G-X$ for the graph obtained from $G$ by removing the vertices in $X$ and all their incident edges; for a single vertex $x\in V(G)$, we write $G-x=G-\{x\}$;
	similarly for a subgraph $F$ of $G$ we let $G-F=G-V(F)$.
	For a set $Y\subseteq E(G)$, we write $G-Y$ for the graph obtained from $G$ by removing the edges in $Y$; for a single edge $e\in E(G)$, we write $G-e=G-\{e\}$.
	Throughout the paper unless otherwise specified a coloring is meant to be an edge-coloring of the given graph $G$. As we always call the two colors red and blue,
	we use red/blue-coloring and 2-coloring as synonyms
	of each other. Given any two graphs $H_1$ and $H_2$,
	we say that a 2-coloring is $(H_1,H_2)$-free, if there is no red copy of $H_1$ and no blue copy of $H_2$.

	\subsection{Organization of the paper}
	In Section~\ref{sec:stars}, we prove Theorem~\ref{thm:Stars}.
	Section~\ref{sec:equivalent} contains the proof of our main equivalence result, namely Theorem~\ref{thm:equivalentpairs}, and in Section~\ref{sec:nonequivalent} we prove Theorem~\ref{thm:non-equivalentpairs} on Ramsey non-equivalent pairs.

	
	\section{Star pairs}\label{sec:stars}
	In this section, we prove Theorem~\ref{thm:Stars}.
	We note that his theorem can be deduced from Theorem~1 in~\cite{NiRa98}. However, the calculations are tedious and 
	for completeness we present explicit constructions here
	when we want to show that there exist
	graphs that are Ramsey for certain pairs of stars and not Ramsey for other star pairs.
	
	\smallskip
	
	Observe that, given positive integers $a$ and $b$, an $(a+b-2)$-regular graph $F$ is a Ramsey graph for a pair $(K_{1,a},K_{1,b})$ if and only if $E(F)$ cannot be decomposed into an $(a-1)$-regular subgraph and a $(b-1)$-regular subgraph.
	A $k$-regular spanning subgraph is also called a \emph{$k$-factor}.
	Our results rely on the rich theory on factors.
	Specifically we need the following fact.

	\begin{lemma}\label{lem:regularFactors}
		Let $p$, $q$ and $r$ be integers, with $p$ and $q$ being odd.
		Further, assume that $p<q\leq r$ if $r$ is odd, and that $p<q\leq r/2$
		if $r$ is even.
		There is an $r$-regular graph that has a $q$-factor 
		and no $p$-factor.
	\end{lemma}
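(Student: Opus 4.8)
The plan is to build the required $r$-regular graph explicitly by gluing together two kinds of pieces: a ``core'' that carries a $q$-factor but is too small or too rigid to admit a $p$-factor, and ``padding'' that makes the whole graph $r$-regular without creating a $p$-factor. Since $p$ and $q$ are odd, I would first invoke the standard parity fact that an odd-regular graph must have an even number of vertices, and more usefully that in any graph a $k$-factor with $k$ odd must saturate each odd component of every relevant subgraph in a restricted way; this is exactly the kind of obstruction exploited in the factor-theoretic literature (and consistent with the remark before the lemma that an $r$-regular $F$ is Ramsey for $(K_{1,a},K_{1,b})$ iff $E(F)$ has no decomposition into an $(a-1)$- and a $(b-1)$-factor). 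The cleanest way to forbid a $p$-factor is to use Tutte's $f$-factor / $p$-factor theorem: a graph $F$ has a $p$-factor iff for all disjoint $S,T\subseteq V(F)$,
\[
  q_F(S,T) \;\leq\; p|S| - p|T| + \sum_{v\in T}\deg_{F-S}(v),
\]
where $q_F(S,T)$ counts the ``odd'' components of $F-S-T$; I would engineer a small set $S$ (a cutset) for which this inequality fails, while checking that the analogous Tutte condition for a $q$-factor is satisfied.

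Concretely, I would start from $K_{r+1}$ when $r$ is odd (which is $r$-regular and, having $r+1$ even, has a perfect matching, hence a $1$-factor; iterating, a disjoint union of $q$ perfect matchings gives a $q$-factor since $q\le r$) — but $K_{r+1}$ may well also have a $p$-factor, so instead I would take several copies of $K_{r+1}$ with one vertex deleted from each, creating vertices of deficiency $1$, and join these deficiency vertices through a small gadget designed to admit a $q$-regular but not a $p$-regular ``through-flow''. The inequality $p<q$ is what makes this possible: by Petersen-type $2$-factor decompositions and the assumption $q\le r$ (resp.\ $q\le r/2$ when $r$ is even, which guarantees enough room to pair up $2$-factors since even-regular graphs decompose into $2$-factors), the core will comfortably contain a $q$-factor, whereas the number of odd components that any $p$-factor would have to ``hit'' through the gadget can be made to exceed what $p$ copies of the cut allow. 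The even-$r$ case needs the stronger hypothesis $q\le r/2$ precisely because we then build the graph as an edge-disjoint union of $2$-factors (a $2r$-fold blow-up / Cayley-graph-type construction on $\mathbb{Z}_n$ with $r/2$ connection ``distances''), and we want both a sub-union realizing $q$ and a global structure with no $p$-factor.

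For the actual construction I would lean on a Cayley-graph realization: take $F = \mathrm{Cay}(\mathbb{Z}_n, \pm\{d_1,\dots,d_k\})$ (adding the involution $n/2$ when $r$ is odd and $n$ even), choosing $n$ and the connection set so that (i) the circulant is $r$-regular, (ii) a sub-collection of the ``distances'' already yields a $q$-regular circulant (each $\pm d_i$ contributes a $2$-factor, and $n/2$ contributes a $1$-factor, so any target of the right parity up to $r$ is attainable — here oddness of $q$ together with oddness of $r$ forces us to always use the $n/2$-matching, which is fine), and (iii) $n$ is chosen with a small prime factor or a divisibility obstruction that blocks a $p$-factor. The main obstacle I anticipate is point (iii): producing a clean, verifiable certificate — via Tutte's condition or via a direct counting/parity argument — that the global graph genuinely has \emph{no} $p$-factor, rather than merely that the obvious circulant $p$-factor fails to exist. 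I would handle this by deliberately inserting a local ``bottleneck'': replace a few vertices by the deficiency-$1$ near-cliques described above, so that $F-S$ for a specific two-element (or $p$-element) set $S$ breaks into strictly more than $p$ odd components, making the Tutte inequality for $p$ fail outright, while the larger value $q$ still satisfies it because $q - p \ge 2$ gives exactly the slack needed in the bound $q|S| - q|T| + \sum_{v\in T}\deg_{F-S}(v)$. Verifying these two inequalities simultaneously, and checking the degree bookkeeping in the even-$r$ versus odd-$r$ cases, is the routine-but-careful part I would relegate to the end.
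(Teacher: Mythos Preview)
Your instinct to use a Tutte/Belck-type odd-component obstruction is exactly the tool the paper uses, but what you have written is a plan rather than a proof, and the plan has a real gap at precisely the point you flag as ``the main obstacle''. The Cayley-graph route is a dead end here: vertex-transitive graphs are the worst possible candidates for a local cut certificate, and ``a divisibility obstruction that blocks a $p$-factor'' is not something you can arrange while simultaneously keeping a $q$-factor just by tweaking $n$. Your fallback bottleneck idea is closer, but as stated it is quantitatively off: for Belck's criterion you need $q_F(S) > p\,|S|$, so a two-element or $p$-element cutset must produce more than $2p$ (respectively $p^2$) odd components, not ``strictly more than $p$''. Moreover, ``$K_{r+1}$ with one vertex deleted'' is $K_r$, in which \emph{every} vertex has deficiency $1$; you have not explained how to repair all $r$ of those defects per block while keeping the blocks odd and keeping only a small cutset.

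The paper resolves this with a very concrete three-step construction. First it manufactures an $r$-regular graph $G$ on an even number of vertices that already carries an explicit $q$-factor $G_q$ together with a carefully placed matching; then it subdivides that matching through a single new vertex $u$ to obtain an odd-order gadget $H$ in which $u$ is the \emph{only} deficient vertex (degree $r-1$ if $r$ is odd, $r-2$ if $r$ is even) and the induced $H_q$ leaves $u$ with degree $q-1$. Finally it takes a clique $K_t$ on a vertex set $D$ (with $t=r-q+1$ when $r$ is odd, $t=r-2q+1$ when $r$ is even, which is where the hypothesis $q\le r/2$ enters) and hangs $q$ copies of $H$ off each $d_j\in D$ via their deficient vertices (two batches of $q$ in the even case). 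This makes the whole graph $r$-regular, the $q$-factor is exhibited explicitly (no Tutte verification needed on the positive side), and deleting $D$ leaves exactly $q\,|D|$ odd components, so $p\,|D| < q\,|D|$ and Belck's theorem forbids a $p$-factor. The missing idea in your proposal is this single-deficiency odd gadget $H$: it is what lets a small hub $D$ simultaneously restore $r$-regularity, thread the $q$-factor through, and create $q$ odd components per hub vertex.
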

	
	In order to prove the above lemma, we apply a theorem due to Belck~\cite{Belck} (a special case of the well-known $f$-factor theorem of Tutte~\cite{Tutte52}), which provides a necessary and sufficient condition for the existence of $k$-factors in regular graphs.
	For a graph $G$ and a set $D\subseteq V(G)$, we call a component $C$ of $G-D$ an \emph{odd component} with respect to~$D$
	if $|V(C)|$ is odd, and we let $q_G(D)$ denote the number of such components.
	We use the following corollary of Theorem \textrm{IV} from~\cite{Belck}.
	
	\begin{thm}[\cite{Belck}]\label{thm:Tutte-pair}
		Let $G$ be a graph and let $p>0$ be an odd integer. If there exists a set $D\subseteq V(G)$ such that 
		$p|D| < q_G(D)$, then $G$ has no $p$-factor.  
	\end{thm}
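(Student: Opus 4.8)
The plan is to argue by contradiction using a parity count combined with double counting of the edges of a hypothetical $p$-factor that cross into $D$. Suppose that, despite the existence of a set $D\subseteq V(G)$ with $p\lvert D\rvert < q_G(D)$, the graph $G$ nevertheless admits a $p$-factor $F$; thus $F$ is a spanning subgraph of $G$ in which every vertex has degree exactly $p$. I will show that the odd components of $G-D$ force at least $q_G(D)$ distinct edges of $F$ to meet $D$, which clashes with the total $F$-degree available at $D$.

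First I would analyze a single odd component. Let $C$ be a component of $G-D$ with $\lvert V(C)\rvert$ odd. Since $C$ is a connected component of $G-D$, every edge of $F$ incident to a vertex of $C$ either has both endpoints in $C$ or joins a vertex of $C$ to a vertex of $D$. Writing $e_F(C)$ for the number of edges of $F$ with both endpoints in $C$ and $e_F(C,D)$ for the number of edges of $F$ with exactly one endpoint in $C$ and the other in $D$, summing the $F$-degrees over $V(C)$ gives
\[
p\,\lvert V(C)\rvert \;=\; \sum_{v\in V(C)} \deg_F(v) \;=\; 2\,e_F(C) + e_F(C,D).
\]
Because $p$ and $\lvert V(C)\rvert$ are both odd, the left-hand side is odd, so $e_F(C,D)$ must be odd, and in particular $e_F(C,D)\geq 1$. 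Hence every odd component of $G-D$ contributes at least one edge of $F$ reaching into $D$.

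Next I would sum over all odd components. Summing the bound $e_F(C,D)\geq 1$ over the $q_G(D)$ odd components of $G-D$ yields
\[
q_G(D) \;\leq\; \sum_{C} e_F(C,D),
\]
the sum being over the odd components $C$. Every edge counted on the right is incident to a vertex of $D$, and the edges arising from distinct components are distinct (different components are vertex-disjoint), so the right-hand side is at most the number of edges of $F$ joining $D$ to $V(G)\setminus D$, which in turn is at most $\sum_{v\in D}\deg_F(v) = p\,\lvert D\rvert$. Combining the two displays gives $q_G(D)\leq p\,\lvert D\rvert$, contradicting the hypothesis $p\,\lvert D\rvert < q_G(D)$. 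Therefore $G$ has no $p$-factor.

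I do not expect a serious obstacle, as this is a clean parity argument; the proof is essentially the classical deficiency version of Tutte's theorem specialized to regular factors. The only point meriting a little care is the final bounding step: one must observe that the edges running from the odd components to $D$ are pairwise distinct and each is incident to exactly one vertex of $D$, so that their total number is bounded by the aggregate $F$-degree $p\lvert D\rvert$ of $D$. The bound is one-sided and far from tight (edges inside $D$, and edges from $D$ to even or trivial components, are simply discarded), but this loss is harmless since only the inequality $q_G(D)\leq p\lvert D\rvert$ is needed to reach the contradiction.
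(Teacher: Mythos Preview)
Your proof is correct; this is the standard parity/double-counting argument. Note, however, that the paper does not supply its own proof of this statement: Theorem~\ref{thm:Tutte-pair} is merely quoted as a corollary of a result of Belck~\cite{Belck} (a special case of Tutte's $f$-factor theorem), so there is no proof in the paper to compare against. Your self-contained elementary argument is a welcome addition, since it avoids invoking the full $f$-factor machinery and uses only the degree-sum identity and the parity of $p$.
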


	\begin{proof}[Proof of Lemma~\ref{lem:regularFactors}]
		Given $p$, $q$ and $r$ as described in 
		the statement, we aim to construct an
		$r$-regular graph $F$ that has a $q$-factor 
		and no $p$-factor. 
		The graph $F$ will be constructed in three steps.
		
		In the first step, we find an $r$-regular graph
		$G$ with an even number of vertices 
		that has a $q$-factor $G_{q}$ and a matching $M_G$
		with $\lfloor (r-1)/2 \rfloor$ edges
		which contains exactly $(q-1)/2$ edges of $G_q$. 
		To this end, define $G$ to be the graph obtained by taking $2r(r-q+1)$ copies $Q_{i,j}$ of $K_{q+1}$, with 
		$i\in[r-q+1]$ and $j\in [2r]$, 
		and adding a perfect matching between any two copies 
		$Q_{i_1,j}$ and $Q_{i_2,j}$ with $i_1\neq i_2$ and 
		$j\in [2r]$. Then $G$ has an even number of vertices and is $r$-regular.
		Moreover, the subgraph $G_q$ that consists of all $Q_{i,j}$ 
		is a $q$-factor. The matching $M_G$ can be found
		by taking $\frac{q-1}{2}$ independent edges from $Q_{1,1}$ and
		one edge from every matching between 
		$Q_{1,j}$ and $Q_{2,j}$ with 
		$2\leq j\leq \lfloor \frac{r-q+2}{2} \rfloor$.
		Set $M_q:=M_G\cap E(G_q)$. 
		
		\smallskip
		For the second step, let $H$ and $H_q$ denote 
		the graphs obtained from $G$, respectively $G_q$, by adding a new vertex $u$ and replacing every edge $vw\in M_G$, respectively $vw\in M_q$, by the edges $uv$ and $uw$. Then $H$ has an odd number of vertices, 
		$u$ is of degree $2\lfloor (r-1)/2 \rfloor$ in $H$, and all other vertices are of degree $r$. Moreover, $H_q$ is a spanning subgraph of $H$ in which $u$ is of degree $q-1$ and all other vertices are of degree $q$.

		\smallskip	
		For the third step, we consider two cases depending on the parity of $r$.
		
		{\bf Case 1: $\mathbf{r}$ is odd.} In this case $u$ has degree $r-1$ in the graph $H$. Let $t=r-q+1$, and let $F=F(q,r)$ denote the graph obtained from a copy of $K_{t}$ with vertex set $D=\{d_j: j \in [t]\}$ and $qt$ vertex disjoint copies $H^1,\ldots,H^{qt}$ of the graph $H$ as follows:
		For each $i\in [qt]$, let $u^i$ denote the copy of $u$ in $H^i$. 
		We partition the set $\{u^i:i\in [qt]\}$ into $t$ sets $U_1,U_2,\ldots,U_t$ each of size $q$
		and, for each $j\in [t]$, add an edge between $d_j$ and each vertex in $U_j$.
		An illustration of the construction is given in Figure~\ref{fig:regularFactor} (left).
		\begin{figure}
			\centering
			\includegraphics{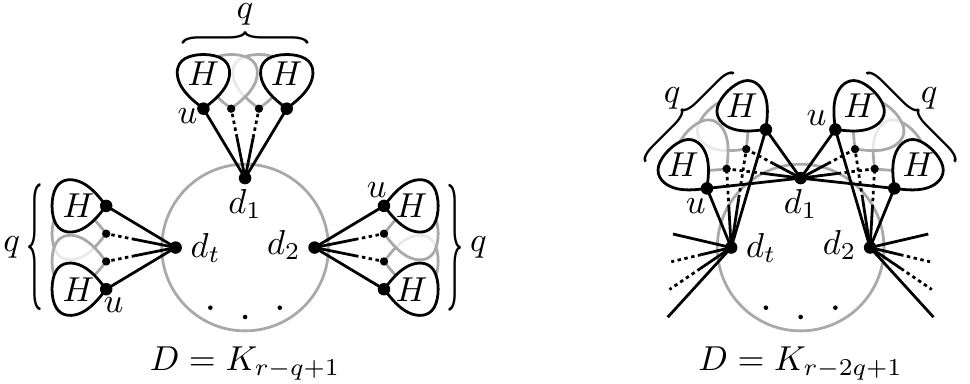}
			\caption{A construction of an $r$-regular graph with a $q$-factor and no $p$-factor for odd $r$ (left) and even $r$ (right).}
			\label{fig:regularFactor}
		\end{figure}		
		Then $F$ is $r$-regular.
		Moreover, $F$ has a $q$-factor, given by the subgraph consisting of all copies of $H_q$ (coming from the $H^i$ with $i\in [qt]$) and all edges between $D$ and the copies of $u$.
		
		It thus remains to show that $F$ does not admit a $p$-factor.
		This follows from Theorem~\ref{thm:Tutte-pair}. Indeed,  the odd components of $F-D$ 
		are exactly the $q(r-q+1)$ copies of $H$, and therefore
		$$p|D| - q_G(D)=p\lvert D \rvert - q(r-q+1) = (p-q)(r-q+1) < 0\, ,$$ 
		since $p < q \leq r$ by assumption.
		
		\smallskip
		
		{\bf Case 2: $\mathbf{r}$  is even.} In this case $u$ has degree $r-2$ in the graph $H$. Moreover, by assumption we have $q\leq r/2$.
		Let $t=r-2q+1$, and let $F=F(q,r)$ denote the graph obtained from a copy of $K_{t}$ with vertex set $D=\{d_1,d_2,\ldots,d_t\}$  and	 $qt$ vertex disjoint copies $H^1,\ldots,H^{qt}$ of the graph $H$ as follows:
		For each $i\in [qt]$, let $u^i$ denote the copy of $u$ in $H^i$. 
		We partition the set $\{u^i:i\in [qt]\}$
		into $t$ sets $U_1,U_2,\ldots,U_t$ each of size $q$
		and, for each $j\in [t]$, add an edge between $d_j$ and each vertex in $U_j\cup U_{j+1}$, where $U_{t+1}:=U_1$. This is illustrated in Figure~\ref{fig:regularFactor} (right).
		
		Then the graph $F$ is $r$-regular. Moreover, it contains a $q$-factor consisting of all copies of $H_q$ (coming from the $H^i$ 
		with $i\in [qt]$) and all edges
		between $d_j$ and $U_j$ for every $j\in [t]$. Furthermore, by Theorem~\ref{thm:Tutte-pair},
		$F$ does not have a $p$-factor. Indeed, the odd components of $F-D$ 
		are exactly the $qt$ copies of $H$, and therefore
		$$p|D| - q_G(D)=p\lvert D \rvert - qt = 
		(p-q)t < 0\, ,$$ 
		since $p< q$ and $q\leq r/2$ and hence $t\geq 1$ by assumption.
	\end{proof}
	
	\begin{proof}[Proof of Theorem~\ref{thm:Stars}]
		First observe that $K_{1,a+b-1}$ is a Ramsey graph for $(K_{1,x},K_{1,y})$ if and only if $x+y\leq a+b$.
		This shows that $(K_{1,a},K_{1,b})\not\sim(K_{1,x},K_{1,y})$ when $a+b\neq x+y$.
		For the remainder of the proof assume that $a+b=x+y$.

		As discussed in the introduction, if $a$ and $b$ are both odd, then $K_{1,a+b-1}$ is the unique minimal Ramsey graph for $(K_{1,a}, K_{1,b})$~\cite{burr1981starforests}.  So $(K_{1,a},K_{1,b})\sim(K_{1,x},K_{1,y})$ if $a$, $b$, $x$, $y$ are all odd.
		It remains to consider the case where at least one of $a$, $b$, $x$, and $y$ is even and find a distinguishing graph, that is, a graph that is Ramsey for one of the pairs of stars and not Ramsey for the other pair.
		Without loss of generality, assume that $a$ is the largest even number in $\{a,b,x,y\}$.
		Let $r=a+b-2=x+y-2$.
		Recall that an $r$-regular graph is a Ramsey for $(K_{1,a},K_{1,b})$ if and only if it has no $(a-1)$-factor. We consider several cases.

		\textbf{Case 1: $\mathbf{xy}$ is odd.} Then each Ramsey graph for $(K_{1,x},K_{1,y})$ contains $K_{1,a+b-1}$, as remarked above, and hence no graph of maximum degree at most $r$ is a Ramsey graph for $(K_{1,x},K_{1,y})$.
		Consider an $r$-regular graph on an odd number of vertices, which exists since $r$ is even.
		Since $(a-1)$ is odd, this graph does not have an $(a-1)$-factor and is therefore  Ramsey for $(K_{1,a},K_{1,b})$ but not Ramsey for $(K_{1,x},K_{1,y})$.
		Thus $(K_{1,a},K_{1,b})\not\sim(K_{1,x},K_{1,y})$.
		
		\textbf{Case 2: $\mathbf{xy}$ is even.} We may assume that $x$ is the larger even number in 
		$\{x,y\}$. Then $a>x$, 
		since $a$ is the largest even number in 
		$\{a,b,x,y\}$, and since
		$\{a,b\} \neq \{x,y\}$. We again distinguish two  cases. 
		
		\textbf{Case 2.1: $\mathbf{b}$ is odd.} Then $r$ is odd. Setting $q=a-1$ and $p=x-1$,
		we know that $p$ and $q$ are odd, and $p<q\leq r$.
		Hence, using Lemma~\ref{lem:regularFactors}, we find
		an $r$-regular graph $F$ that has an $(a-1)$-factor and no $(x-1)$-factor.
		Thus  $F\not\to(K_{1,a},K_{1,b})$ and $F\to(K_{1,x},K_{1,y})$.
		Hence $(K_{1,a},K_{1,b})\not\sim(K_{1,x},K_{1,y})$.
		
		\textbf{Case 2.1: $\mathbf{b}$ is even.} Then $r$ and $y$ are even.
		As we have $a>x\geq y$ and $a+b-2=r=x+y-2$, 
		we obtain $b<y\leq \frac{r+2}{2}$. 
		Setting $q=y-1$ and $p=b-1$,
		we know that $p$ and $q$ are odd, and $p<q\leq \frac{r}{2}$.
		Hence, using Lemma~\ref{lem:regularFactors}, we find
		an $r$-regular graph $F$ that has a $(y-1)$-factor
		and no $(b-1)$-factor. Thus $F\not\to(K_{1,x},K_{1,y})$ and
		$F\to(K_{1,a},K_{1,b})$. Hence $(K_{1,a},K_{1,b})\not\sim(K_{1,x},K_{1,y})$.		
	\end{proof}

	\section{Equivalence results for trees and cliques}\label{sec:equivalent}
	
	\begin{proof}[Proof of Theorem~\ref{thm:equivalentpairs}\ref{thm:StarClique}] If $F\to (K_{1,s},K_t\cdot K_2)$, then also $F\to (K_{1,s},K_t)$. It suffices to show that, if $F\not\to (K_{1,s},K_t\cdot K_2)$, then also $F\not\to(K_{1,s},K_t)$.

		Let $F$ denote a graph that is not Ramsey for $(K_{1,s},K_t\cdot K_2)$, and let $c$ be a $(K_{1,s},K_t\cdot K_2)$-free coloring of $E(F)$ that minimizes the number of blue copies of $K_t$ among all such colorings. We claim that $c$ has no blue copies of $K_t$ and hence $F\not\to (K_{1,s},K_t)$.
		
		For a contradiction, assume that there exists a blue copy of $K_t$ under $c$. Note that the blue copies of $K_t$ must be pairwise disjoint and that there are no blue edges leaving any of these copies, that is, the copies of $K_t$ form isolated components in the blue subgraph of $F$ under $c$. 
		Then we greedily choose an alternating walk $W$ as follows, where we note that $W$ may visit some vertices twice but uses each edge at most once: We start with an arbitrary edge $e$ belonging to a blue copy of $K_t$ and extend $W$ in both directions.
		Now, all edges of $F$ incident to $e$ that are not in this copy of $K_t$ are red.
		From each endpoint of $e$ the walk then follows one such red edge (if it exists).
		If (in either direction) the walk then reaches a previously unvisited blue copy of $K_t$, the walk follows an arbitrary blue edge in this copy and then proceeds with some previously unvisited red edge as before.
		This procedure stops in some direction if either an endpoint of the last blue edge does not have any unvisited incident red edge, or if the last vertex of the walk is contained in a blue copy of $K_t$ that already has an edge in the walk, see Figure~\ref{fig:recolouringwalk}, or if the last vertex is in no copy of $K_t$. This implies that in the latter two cases the last edge in this direction is red. Observe that, as soon as $W$ repeats a vertex, it must stop in that direction at that vertex: if $W$ repeats a vertex before it stops in some direction, then this vertex has degree at least four in $W$ and is therefore incident to at least two blue edges in $W$. But this is not possible since the blue copies of $K_t$ are disjoint and $W$ traverses at most one edge from each such copy. In other words, except possibly for the last step in each direction, $W$ must be a red/blue-alternating path. Therefore, each vertex of $W$ that is not an endpoint is incident to exactly one red and one blue edge; the endpoints may be incident to multiple red edges but again at most one blue edge.

		\begin{figure}
			\centering
			\includegraphics{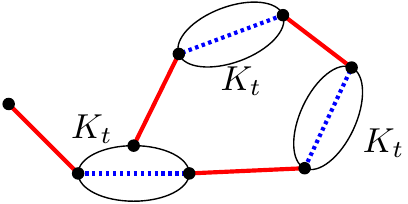}
			\caption{An alternating walk (with solid red edges and dotted blue edges) chosen according to the proof of Theorem~\ref{thm:equivalentpairs}\ref{thm:StarClique}.}
			\label{fig:recolouringwalk}	\end{figure}
		
		We now switch the colors of the edges in $W$ and call the new coloring $\tilde{c}$. 
		We claim that the coloring $\tilde{c}$ contains
		\begin{enumerate}
			\item no red copy of $K_{1,s}$, 
			\item fewer blue copies of $K_t$ than $c$, and
			\item no blue copy of $K_t\cdot K_2$. 
		\end{enumerate}
		By the definition of $c$, this already leads to the desired contradiction. 		
		
		To prove (1), observe that the switch does not change the number of red edges on the inner vertices of $W$. Now consider an endpoint $v$ of $W$.
		The number of red edges incident to $v$ in $W$ decreases if the corresponding first or last edge of $W$ is red.
		If the corresponding edge is blue, then there are no red edges under $c$ incident to $v$ by the construction of $W$. 
		Hence $v$ is incident to exactly one red edge under $\tilde{c}$. Therefore, there is no red copy of $K_{1,s}$ under $\tilde{c}$.
		
		\smallskip
		We prove (2) now. First observe that $W$ contains at least one edge belonging to a copy of $K_t$ that is blue under $c$. So, after switching colors, this copy of $K_t$ contains a red edge. Therefore, the only way for (2) to fail is that we create a new blue copy of $K_t$ by switching the colors along $W$. So, consider any edge $uv$ in $W$ whose color switched from red to blue and such that $uv$ is contained in a copy $K$ of $K_t$. We aim to show that $K$ is not monochromatic blue under $\tilde{c}$.
		To do so, choose an arbitrary vertex $x\in V(K)\setminus \{u,v\}$. What we will see is that either $ux$ or $vx$ is red under $\tilde{c}$, which will prove the claim. Assume that the statement is false; we consider three cases depending on the original colors of $ux$ and $vx$.
		
		\noindent\textbf{Case 1}: Assume $ux$ and $vx$ are both red under $c$. Then both of these edges and $uv$ must have switched colors and hence belong to $W$. This however contradicts the fact that at most two vertices are incident to two red edges of $W$ under $c$.
		
		\noindent\textbf{Case 2}: Assume $ux$ and $vx$ are both blue under $c$. By the construction of $W$, at least one of $u$ and $v$, say $u$, must be contained in a copy $K'$ of $K_t$ entirely colored blue under $c$. But then $K'$ together with the edge $ux$ (if $x\notin V(K')$) or the edge $vx$ (if $x\in V(K')$) forms a blue copy of $K_t\cdot K_2$ under $c$, a contradiction to the choice of $c$.
		
		\noindent\textbf{Case 3}: Assume $ux$ is red and $vx$ is blue under $c$ (the case where $ux$ is blue and $vx$ is red is similar). Then $vx\notin W$, and both $ux$ and $uv$ have switched colors, i.e., $ux,uv\in W$. Therefore, $u$ is incident to two red edges of $W$ under the coloring $c$, and hence must be an endpoint of $W$. Moreover, we see that there must exists a copy $K'$ of $K_t$ entirely colored blue under $c$ which contains both $v$ and $x$. Indeed, by the construction of $W$, at least one of the vertices $v$ and $x$ must be contained in such a copy $K'$, and the other vertex must also belong to this copy since $vx$ is blue and $c$ does not contain a blue copy of $K_t\cdot K_2$.
		Now, we may assume that in our procedure the edge $ux$ is added to $W$ after the edge $uv$ (otherwise exchange the role of the two edges). Then, since $vx\notin W$, it follows that, before the procedure adds $ux$ to $W$, the walk meets the clique $K'$ already a second time when it reaches $x$. But then the procedure stops at $x$ and hence does not add $ux$ to $W$, a contradiction. 
		
		\smallskip
		It remains to check property (3). Assume that there is a blue copy of $K_t\cdot K_2$ in $\tilde{c}$, with $t$-clique $K$ and
		pendent edge $f$. As we have already seen, the switching of colors does not create new blue copies of $K_t$. Hence, $K$ must be blue under $c$ and all edges intersecting $K$ in exactly one vertex must be red under $c$, since $c$ does not contain a monochromatic copy of $K_t\cdot K_2$. But this means that $f$ must be red in $c$ and hence $f\in W$. By the definition of $W$, some edge of $K$ must be added to it, and the color of this edge is then switched from blue to red, a contradiction.
	\end{proof}

	To prove Theorem~\ref{thm:equivalentpairs}\ref{thm:StarCaterpillar} we make use of the following definition.
	We call a graph $G$ \emph{$k$-woven} if, 
	for each graph $F$ that contains an edge $uv$ which is contained in all copies of $G$ in $F$, there is a set $Y_{uv}\subseteq E(F)\setminus\{uv\}$ such that the following holds: 
	$Y_{uv}$ consists of at most $k$ edges incident to $u$ and at most $k$ edges incident to $v$, and each copy of $G$ in $F$ contains an edge from $Y_{uv}$.
	In other words, $Y_{uv}$ is a set of edges of size at most $2k$ whose removal yields a graph with no copies of $G$ that still contains the edge $uv$. 
	As a simple example, it is not difficult to check that stars with at least two edges are $1$-woven.
	Indeed, if a graph $F$ has an edge $uv$ that is contained in each copy of some star $K_{1,s}$ in $F$, then $u$ and $v$ are of degree at most $s$ in $F$ (and all other vertices are of degree at most $s-1$).
	So any set $Y_{uv}$ consisting of one edge incident to $u$ and one edge incident to $v$ in $G-uv$ satisfies the condition sated above, that is, each copy of $K_{1,s}$ in $F$ contains an edge from $Y_{uv}$.	
	We begin by demonstrating the utility of $k$-woven graphs in Proposition~\ref{prop:woven} below. We will then prove Theorem~\ref{thm:equivalentpairs}\ref{thm:StarCaterpillar} by showing that suitable caterpillars are $k$-woven for appropriately chosen $k$. For any pair of graphs $(G,H)$, we write $r(G,H)$ for the Ramsey number of $(G,H)$, i.e., the smallest integer $n$ such that $K_n \rightarrow (G,H)$.

	\begin{prop}\label{prop:woven}
		Let $G$ be a $k$-woven graph, let $a\geq 1$ and $b\geq 2$ be integers, and let $r=r(G,K_{b-1})$.
		If $t\geq 4k + 2 (r+(a-1)(b-1)) + (a-1)$, then $(G,K_t)\sim(G,K_t\cdot aK_b)$.
	\end{prop}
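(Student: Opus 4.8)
The plan is to prove the two inclusions $\mathcal{R}(G,K_t\cdot aK_b)\subseteq\mathcal{R}(G,K_t)$ and $\mathcal{R}(G,K_t)\subseteq\mathcal{R}(G,K_t\cdot aK_b)$ separately. The first is immediate: $K_t$ is a subgraph of $K_t\cdot aK_b$, so any $2$-coloring yielding a blue copy of $K_t\cdot aK_b$ also yields a blue $K_t$, whence $F\to(G,K_t\cdot aK_b)$ implies $F\to(G,K_t)$. The content is the reverse inclusion, which amounts to: \emph{every graph $F$ admitting a $(G,K_t\cdot aK_b)$-free coloring also admits a $(G,K_t)$-free coloring}. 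This follows the same extremal strategy as the proof of Theorem~\ref{thm:equivalentpairs}\ref{thm:StarClique}, with the $k$-wovenness of $G$ playing the role of the ad hoc alternating-walk argument used there for stars. So fix such an $F$, and among all $(G,K_t\cdot aK_b)$-free colorings of $F$ pick one, $c$, minimizing the number of blue copies of $K_t$; the aim is to show this number is $0$, since then $c$ is $(G,K_t)$-free. Assuming otherwise, fix a blue $K_t$, say $K$; we then build another $(G,K_t\cdot aK_b)$-free coloring with strictly fewer blue copies of $K_t$, contradicting the minimality of $c$.

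First I would extract two structural facts about $c$, both exploiting the absence of a red copy of $G$ together with $r=r(G,K_{b-1})$, through a Ramsey-type extraction followed by a greedy disjointification argument (always avoiding the at most $(a-1)(b-1)$ vertices already committed to previously chosen pendant cliques, which is possible since $t$ is large). \textbf{(i)} At most $a-1$ vertices of $K$ can be \emph{heavy}, where $v\in V(K)$ is heavy if some blue copy of $K_b$ has $v$ as its unique vertex in $V(K)$; indeed $a$ heavy vertices yield, after disjointification, a blue $K_t\cdot aK_b$. Consequently at least $t-(a-1)\ge 4k+2\parens{r+(a-1)(b-1)}$ vertices of $K$ are \emph{light}, and one can arrange, for a light vertex, that it has few blue neighbours outside $V(K)$. \textbf{(ii)} The set $P$ of vertices outside $V(K)$ that are blue-adjacent to all but at most $k+r+(a-1)(b-1)$ vertices of $V(K)$ satisfies $\card{P}<r+(a-1)(b-1)$: otherwise $P$ would carry $a$ pairwise disjoint blue copies of $K_{b-1}$, each sharing a blue neighbour with (distinct) vertices of $K$, again producing a blue $K_t\cdot aK_b$.

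Now the surgery. Choose an edge $uv$ of $K$ with both endpoints light --- the abundance of light vertices gives the room needed to impose the additional condition controlling the interaction with $P$ --- and recolor $uv$ red. Since $c$ had no red $G$, every copy of $G$ in the resulting red graph contains $uv$, so $k$-wovenness supplies a set $Y_{uv}$ of at most $k$ red edges at $u$ and at most $k$ red edges at $v$ meeting all of them; recolor $Y_{uv}$ blue to obtain $c'$. By construction $c'$ has no red $G$. Any edge blue in $c'$ but not in $c$ lies in $Y_{uv}$, hence is incident to $u$ or $v$ and was red in $c$, so it leaves the blue clique $V(K)$; therefore any blue copy of $K_t$ or of $K_t\cdot aK_b$ present in $c'$ but not in $c$ must pass through $u$ or $v$ and use a vertex outside $V(K)$. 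Dissecting a would-be new blue $K_t$, one sees it shares all but a bounded number of vertices with $V(K)$, and its new vertices must lie in $P$ and be red neighbours of $u$ or $v$ in $c$; by (i), (ii) and the choice of $uv$ this is impossible. A similar but easier analysis rules out a new blue $K_t\cdot aK_b$ (its clique part would be a blue $K_t$, hence old, and then its pendant cliques are built from old blue edges, contradicting the choice of $c$). Since $K$ contains $uv$ and is destroyed while nothing new is created, $c'$ is a $(G,K_t\cdot aK_b)$-free coloring with strictly fewer blue copies of $K_t$ than $c$ --- the desired contradiction.

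I expect the last step --- showing that recoloring $Y_{uv}$ creates no new monochromatic copy --- to be the main obstacle. The difficulty is that $k$-wovenness only guarantees the existence of \emph{some} hitting set $Y_{uv}$, with no control over which vertices its $\le 2k$ edges reach, so one must juggle three quantitative inputs at once: $Y_{uv}$ is local (incident only to $u$ and $v$, and red in $c$), the ``$K$-rich'' set $P$ is small, and $K$ has many light vertices. The way these must be balanced is exactly what forces the hypothesis $t\ge 4k+2\parens{r+(a-1)(b-1)}+(a-1)$; since Theorem~\ref{thm:equivalentpairs}\ref{thm:StarClique} gives the statement for all $t\ge 3$ in the star case ($k=1$, $a=1$, $b=2$), this bound is certainly not optimal, and I would aim for the cleanest bookkeeping rather than the best constant.
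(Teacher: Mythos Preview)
Your overall plan---pick an extremal $(G,K_t\cdot aK_b)$-free coloring, locate a blue $K_t$, flip one edge red, invoke $k$-wovenness to repair the red side, and argue nothing new is created in blue---is natural and echoes the proof of Theorem~\ref{thm:equivalentpairs}\ref{thm:StarClique}. It is, however, \emph{not} the route the paper takes for Proposition~\ref{prop:woven}: there one works globally, selecting a maximal family $\mathcal{B}$ of blue copies of $K_t$ with pairwise small intersection, recoloring a full matching across $\bigcup_{K\in\mathcal{B}}(V(K)\setminus U_K)$ in one shot, and then applying wovenness iteratively to each matching edge. The final step is a counting argument showing every copy of $K_t$ still contains a red edge. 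No minimality is used.

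More importantly, your structural claims (i) and (ii) both have a real gap, and it is the same one. In (i), knowing that $a$ vertices are ``heavy'' (each lies in \emph{some} blue $K_b$ pendant) does not let you disjointify: a heavy vertex may have exactly one such pendant, and once you commit its $b-1$ external vertices you may have destroyed the only pendant available to the next heavy vertex. Your parenthetical ``always avoiding the at most $(a-1)(b-1)$ vertices already committed'' presumes each heavy vertex has many pendants to choose from, which your definition does not guarantee. In (ii), the Ramsey number $r=r(G,K_{b-1})$ lets you extract a blue $K_{b-1}$ from $r$ vertices only when those vertices span a \emph{complete} graph; your set $P$ has no such structure. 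Indeed, for $b\ge 3$ one can have arbitrarily many vertices outside $K$ that are blue-complete to $V(K)$ yet pairwise non-adjacent; then $P$ is huge and carries no $K_{b-1}$ at all. So neither the bound $\lvert P\rvert<r+(a-1)(b-1)$ nor the subsequent ``its new vertices must lie in $P$, hence impossible'' step is justified.

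The paper's remedy is precisely to bake the clique condition into the definition: $U_K$ consists of those vertices $u\in V(K)$ that have at least $r+(a-1)(b-1)$ blue neighbours in $F-K$ \emph{whose endpoints induce a complete graph}. This is what makes the Ramsey extraction and the greedy disjointification legitimate, yielding $\lvert U_K\rvert\le a-1$. Once you have this, the natural objects to recolor are edges inside $V(K)\setminus U_K$, and the analysis of a would-be surviving blue $K_t$ proceeds by counting red edges at its vertices in $V(K)\setminus U_K$ (each such vertex sends fewer than $r+(a-1)(b-1)$ blue edges into any clique outside $K$), which is where the factor $2(r+(a-1)(b-1))$ in the hypothesis on $t$ is actually spent. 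Your local one-edge-at-a-time scheme could perhaps be made to work, but not with the stated lower bound on $t$, and not without replacing ``heavy'' and $P$ by notions that carry the clique hypothesis needed for Ramsey.
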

	\begin{proof}
		Clearly, each Ramsey graph for $(G,K_t\cdot aK_b)$ is also a Ramsey graph for $(G,K_t)$.
		So consider a graph $F$ with $F\not \to (G,K_t\cdot aK_b)$.
		We shall show that $F\not \to (G,K_t)$.
		Let $\varphi_1$ denote a 
		$(G,K_t\cdot aK_b)$-free coloring of $E(F)$.
		For each blue copy $K$ of $K_t$ in $F$, let $U_K\subseteq V(K)$ denote the set of vertices $u$ in $K$ such that there are at least $r+(a-1)(b-1)$ blue edges between $u$ and $F-K$ whose endpoints induce a complete graph in $F-K$.
		See Figure~\ref{fig:wovenEquivalence} (left).
		
		\begin{figure}
			\centering
			\includegraphics{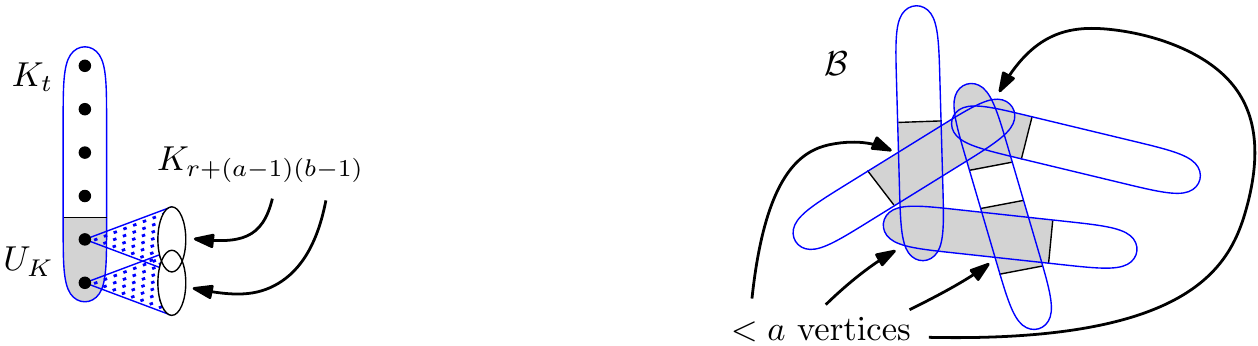}
			\caption{Left: The set $U_K$ (gray background) in a copy $K$ of $K_t$. Right: A set $\mathcal{B}$ of blue copies of $K_t$ under $\varphi_1$ with pairwise intersection of size less than $a$. The intersections are contained in the respective sets $U_K$ by Claim~\ref{clm:intersections}.}
			\label{fig:wovenEquivalence}
		\end{figure}
		
		Let $\mathcal{B}$ denote a maximal set of blue copies of $K_t$ in $F$ such that any two copies of $K_t$ in $\mathcal{B}$ intersect in fewer than $a$ vertices.
		See Figure~\ref{fig:wovenEquivalence} (right).
		We first make several general observations.
		
		\begin{clm}\label{clm:intersections}
			For any two copies $K$, $K'\in\mathcal{B}$, we have $V(K)\cap V(K') \subseteq U_K\cap U_{K'}$ and hence $(V(K)\setminus U_K)\cap (V(K')\setminus U_{K'})=\emptyset$.
		\end{clm}
		\begin{proof}
			For each vertex $u\in V(K)\cap V(K')$ the number of blue edges between $u$ and $K-K'$, as well as between $u$ and $K'-K$, is at least $t-(a-1)\geq 4k + 2 (r+(a-1)(b-1))  \geq r+(a-1)(b-1)$. Since these blue neighborhoods induce a complete graph, we have $u\in U_K$ and $u\in U_{K'}$.
		\end{proof}
		
		\begin{clm}\label{clm:leftover}
			For every $K\in \mathcal{B}$, we have $\lvert U_{K}\rvert \leq a-1$ and hence $\lvert V(K)\setminus U_{K}\rvert \geq 2$.
		\end{clm}
		\begin{proof}
			We first argue that $\lvert U_{K}\rvert \leq a-1$.
			This holds since, under $\varphi_1$, each vertex in $U_{K}$ has a blue neighborhood of size at least $r+(a-1)(b-1)$ in $F-K$ that induces a complete graph.
			As  there are no red copies of $G$ under $\varphi_1$, by the definition of Ramsey number, we iteratively find $\lvert U_{K}\rvert$ vertex-disjoint blue copies of $K_{b-1}$ in the blue neighborhood of $U_{K}$, one for up to $\min\{a,\lvert U_{K}\rvert\}$ vertices in $U_{K}$.
			So $\lvert U_{K}\rvert \leq a-1$, as there is no blue copy of $K_t\cdot aK_b$ under $\varphi_1$.
			This shows that $\lvert V(K)\setminus U_{K}\rvert \geq t-a+1 \geq 2$, as required.
		\end{proof}
		
		We shall now recolor some edges contained in or incident to cliques in $\mathcal{B}$ to obtain an 
		$(G,K_t)$-free coloring of $E(F)$.
		Let $M$ denote a largest matching in the graph induced by  $\cup_{K\in\mathcal{B}}(V(K)\setminus U_K)$.
		That is, $M$ consists of a largest matching from each of the cliques $K-U_K$, which are vertex-disjoint by Claim~\ref{clm:intersections} (see Figure~\ref{fig:wovenEquivalence2}).
		Let $\varphi_2$ denote the coloring obtained from $\varphi_1$ by switching the color of each edge in $M$ from blue to red.
		Each red copy of $G$ under $\varphi_2$ contains an edge from $M$.
		Let $u_1v_1,\ldots,u_{\lvert M\rvert}v_{\lvert M\rvert}$ denote the edges of $M$ in an arbitrary order.		
		We shall use the fact that $G$ is $k$-woven to find sets $Y_1,\ldots,Y_{\lvert M\rvert}\subseteq E(F)\setminus M$ such that each set $Y_i$ consists of at most $k$ red edges incident to $u_i$ and at most $k$ red edges incident to $v_i$ and such that each red copy of $G$ in $F$ under $\varphi_2$ contains an edge from $Y_i$ for some $i \in [\lvert M\rvert]$.
		To do so consider the subgraph $F_1$ of $F$ formed by all red copies of $G$ under $\varphi_2$ containing $u_1v_1$ and not containing $u_jv_j$ for $j>1$.
		Then $F_1-u_1v_1$ contains no copy of $G$, and hence, since $G$ is $k$-woven, there is a desired set $Y_1\subseteq E(F_1)\setminus\{u_1v_1\}$.
		For $i>1$ we proceed iteratively.
		Having chosen $Y_1,\ldots,Y_{i-1}$, let $F_i$ denote the subgraph of $F$ formed by all red copies of $G$ under $\varphi_2$ containing $u_iv_i$, not containing any edge from $Y_j$ for $j<i$, and not containing $u_jv_j$ for $j>i$.
		Then $F_i-u_iv_i$ contains no copy of $G$.
		Hence, since $G$ is $k$-woven, there is a desired set $Y_i\subseteq E(F_i)\setminus\{u_iv_i\}$.

		\begin{figure}
			\centering
			\includegraphics{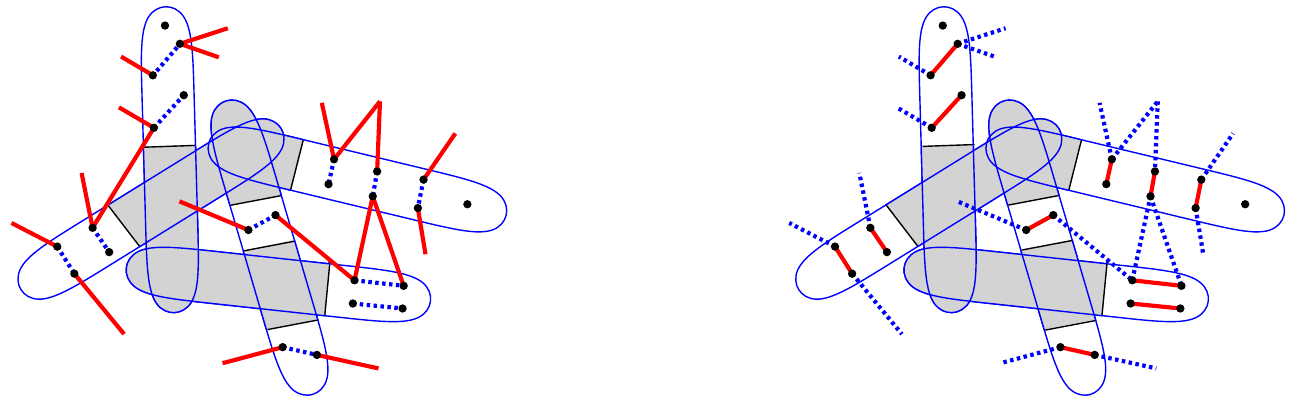}
			\caption{Left: A largest blue (dotted) matching $M$ in $\cup_{K\in\mathcal{B}}(V(K)\setminus U_K)$ with some pendent red (solid) edges under $\phi_1$. Right: The final coloring $\varphi_3$ is obtained by switching colors in $M$ (from blue to red) and at most $k$ further incident edges at each vertex in $M$ (from red to blue).}
			\label{fig:wovenEquivalence2}
		\end{figure}
		
		Now let $\varphi_3$ denote the coloring obtained from $\varphi_2$ by switching the color of each edge in $\cup_{1\leq i\leq \lvert M\rvert}Y_i$ from red to blue (see Figure~\ref{fig:wovenEquivalence2}).
		Then there are no red copies of $G$ under $\varphi_3$.
		Indeed, each red copy $G'$ of $G$ under $\varphi_2$ contains an edge from $Y_i$ for some $i$.
		We shall prove that there are no blue copies of $K_t$ under $\varphi_3$.
		Let $K'$ denote a copy of $K_t$ in $F$.
		
		First suppose that $K'\in\mathcal{B}$. By Claim~\ref{clm:leftover}, we have  $\lvert V(K')\setminus U_{K'}\rvert \geq 2$, and hence $K'$ contains a red edge under $\varphi_3$ from $E(K')\cap E(M)$.
		
		We may assume then that $K'\not\in\mathcal{B}$.
		If for each $K\in\mathcal{B}$ we have $V(K)\cap V(K')\subseteq U_K$, then $\lvert V(K)\cap V(K')\rvert < a$ by Claim~\ref{clm:leftover}.
		By the maximality of $\mathcal{B}$, $K'$ contains a red edge under $\varphi_1$. This edge is red under $\varphi_3$, since only edges incident to $M$ switched colors from red to blue and the edges in $K'$ are not incident to $M$ (as $V(K)\cap V(K')\subseteq U_K$ for each $K\in\mathcal{B}$ here). So $K'$ is not blue in this case.

		If there is $K\in\mathcal{B}$ with $\lvert V(K)\cap V(K')\rvert > \lceil\frac{t-\lvert U_K\rvert}{2}\rceil + \lvert U_K\rvert$, then $K'$ contains an edge from $M\cap K$.
		This edge is red under $\varphi_3$ and so $K'$ is not blue.

		If neither of the two previous cases holds, then let $V=\cup_{K\in\mathcal{B}}(V(K')\cap (V(K)\setminus U_K))$.
		By assumption $\lvert V\rvert\geq 1$ (since we are not in the first case).
		Each vertex $v\in V$ is contained in exactly one $K\in\mathcal{B}$ and, since we are not in the second case, the number of edges between $v$ and $K'-K$ is at least $t-\lceil\frac{t-\lvert U_K\rvert}{2}\rceil - \lvert U_K\rvert = \lfloor\frac{t-\lvert U_K\rvert}{2}\rfloor$.
		Since $v\not\in U_K$, fewer than $r+(a-1)(b-1)$ of those edges are colored blue under $\varphi_1$ (as their endpoints induce a complete subgraph of $K'$).
		Together, this means that the number of red edges under $\varphi_1$ incident to $v$ in $K'$ is at least $\lfloor\frac{t-\lvert U_K\rvert}{2}\rfloor-r-(a-1)(b-1)+1 \geq 2k+1$, using the fact that $\lvert U_K\rvert\leq a-1$ by Claim~\ref{clm:leftover}.
		In total there are at least $(2k+1)\lvert V\rvert / 2>k\lvert V\rvert$ red edges in $K'$ under $\varphi_1$.
		To obtain $\varphi_3$, for each $v\in V$, at most $k$ incident edges were chosen to switch colors from red to blue (in total more edges incident to $v$ than those $k$ might have switched colors due to other vertices in $V$).
		So at most $k\lvert V\rvert$ edges in $K'$ switched their color from red to blue.
		This shows that at least one edge in $K'$ is red under $\varphi_3$.

		Altogether there are no red copies of $G$ and no blue copies of $K_t$ under $\varphi_3$ and hence $F\not \to (G,K_t)$.
	\end{proof}
	
	\begin{proof}[Proof of Theorem~\ref{thm:equivalentpairs}\ref{thm:StarCaterpillar}]
		
		By Proposition~\ref{prop:woven}, it suffices to show that stars and suitable caterpillars are $k$-woven for some $k$. As mentioned above, it is not difficult to check that  stars with at least two edges are $1$-woven. We now focus on caterpillars, and claim that every $s$-suitable caterpillar is $2(s+1)^2$-woven.
		
		Let $T$ be an $s$-suitable caterpillar, that is, $T$ consists of a path $abc$ and $s$ leaves adjacent to $a$, $s$ leaves adjacent to $c$, and $s'<s$ leaves adjacent to $b$.
		We shall prove that $T$ is $k$-woven for $k=2(s+1)^2$.
		Let $F$ be a graph with an edge $uv$ that is contained in all copies of $T$ in $F$, and let $F'=F-uv$.
		We need to find a set $Y_{uv}\subseteq E(F')$ consisting of at most $k$ edges incident to $u$ and at most $k$ edges incident to $v$ such that $Y_{uv}$ contains an edge from each copy of $T$ in $F$.
		
		First suppose that there is a copy $T_0$ of $T$ in $F$ in which $u$ is a leaf.
		The neighbor of $u$ in $T_0$ is $v$, since $T_0$ contains $uv$ by assumption.
		Then $v$ is of degree at most $\lvert V(T)\rvert-2=1+2s+s'\leq k$ in $F'$, since otherwise $uv$ can be replaced in $T_0$ by some edge $vw$ in $F'$ to form a copy of $T$ entirely in $F'$, which does not exist by assumption. 
		Let $Y_{v}$ consist of all edges in $F'$ incident to $v$.
		If each copy of $T$ in $F$ contains an edge from $Y_v$, then we can choose $Y_{uv}=Y_v$ as our desired 
		edge set.
		Otherwise, there is a copy of $T$ in $F$ containing no edges from $Y_v$.
		In such a copy of $T$ the vertex $v$ is a leaf, since $uv$ is the only edge incident to $v$ not in $Y_v$.
		Similarly as above, $u$ is of degree at most $k$ in $F'$, and hence we can choose $Y_{uv}$ to consist of all edges incident to $u$ and all edges incident to $v$ in $F'$.
		
		It remains to consider the case where neither $u$ nor $v$ is a leaf in any copy of $T$ in $F$.
		By the symmetry of $T$, we may assume that in each copy of $T$ in $F$ the edge $ab$ corresponds to $uv$, where $a$ corresponds to either $u$ or $v$.
		Let $N_u$ and $N_v$ denote the set of neighbors of $u$ and $v$ in $F'$, respectively, that are of degree at least $s+1$ in $F'$ (see Figure~\ref{fig:wovenCaterpillar} left).
		In each copy of $T$ in $F$ the edge $bc$ corresponds to an edge $uw$ with $w\in N_u$ or an edge $vw$ with $w\in N_v$.
		In particular choosing $Y_{uv}=\{uw\colon w\in N_u\}\cup \{vw\colon w\in N_v\}$ yields the desired edge set, provided that $\lvert N_u\rvert$, $\lvert N_v\rvert\leq k$.
		In the following, we prove $\lvert N_u\rvert\leq k$. By symmetry the same bound holds for $\lvert N_v\rvert$.

		For a contradiction, assume that $\lvert N_u\rvert \geq k+1$, which in particular implies that $u$ is of degree at least $k+1$ in $F'$.
		We shall prove that there is a copy of $T$ in $F'$ under this assumption.
		For each $w\in N_u$, choose a star in $F'$ 
		with center vertex $w$ and exactly $s$ leaves not containing $u$,
		and let $\mathcal{S}$ denote the set of all chosen stars.
		For any two such stars $S$, $S'\in\mathcal{S}$ there are at least $k+1-2(s+1)\geq s > s'$ neighbors of $u$ in $F'$ not contained in $V(S)\cup V(S')$.
		Since $F'$ does not contain a copy of $T$, the stars $S$ and $S'$ must intersect in some vertex, which could be the center of one of the stars but not of both (see Figure~\ref{fig:wovenCaterpillar} middle).
		
		\begin{figure}	
			\centering	
			\includegraphics{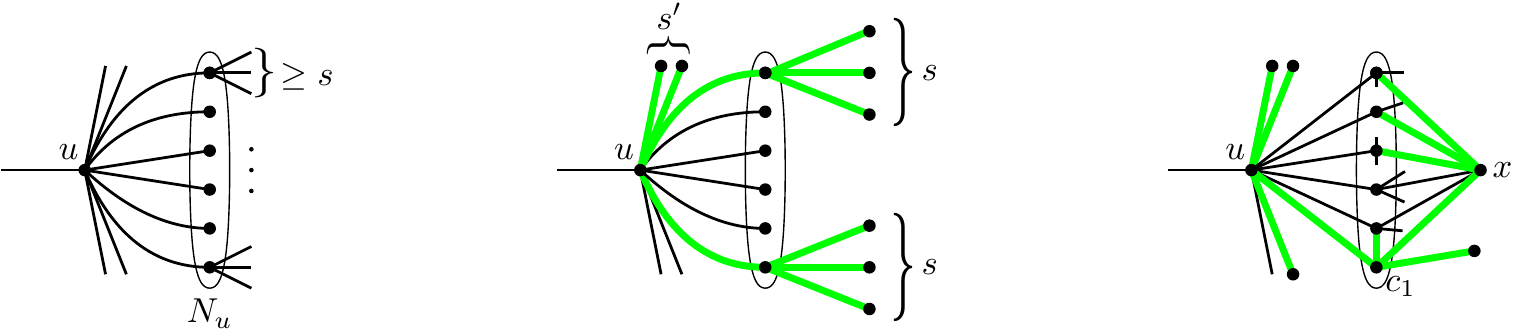}
			\caption{Left: The vertex $u$ and the set $N_u$ of at least $k+1$ neighbors of $u$ of degree at least $s+1$ each. Middle: If two vertices in $N_u$ have only $u$ as a common neighbor, then there is copy of $T$ (green/thick gray). Right: Otherwise, there is a vertex $x$ that is a leaf in at least $2s+1$ copies of $K_{1,s}$ centered in $N_u$ and there is a copy of $T$ (here a $3$-suitable caterpillar) as well (green/thick gray).}
			\label{fig:wovenCaterpillar}
		\end{figure}

		Now, consider some fixed star $S\in\mathcal{S}$.
		By the pigeonhole principle, there is a vertex $x$ in $V(S)$ that is contained in at least 
		$\lvert\mathcal{S}\setminus\{S\}\rvert/\lvert V(S)\rvert = (\lvert N_u\rvert-1)/(s+1) \geq \frac{k}{s+1}=2(s+1)$ of the stars in $\mathcal{S}$.
		It may happen that $x$ is the center vertex of one such star, but in any case there is a family of $2s+1$ stars in $\mathcal{S}$ that have $x$ as a leaf.
		Let $X=\{c_1,c_2,\ldots,c_{2s+1}\}\subseteq N_u$ denote the set of their centers.
		Then we find a copy of $T$ in $F'$ as follows:
		Let $X_1$ denote a set of $s'$ neighbors of $c_1$ distinct from $u$ and $x$ in $F'$, which exists since $s'<s$ and $c_1\in X$ is of degree $s+1$ in $F'$.
		Let $X_2$ denote a set of $s$ neighbors of $x$ in $X$ disjoint from $X_1\cup\{c_1\}$, which exists since $|X|=2s+1\geq s+s'+2$.
		Finally, let $X_3$ denote a set of $s$ neighbors of $u$ in $F'$ disjoint from $X_1\cup X_2\cup \{x,c_1\}$, which exists since the degree of $u$ in $F'$ is at least $k+1=2(s+1)^2+1\geq 2s+s'+2$.
		Then the path $uc_1x$ together with the vertices in $X_1$, $X_2$, and $X_3$ induces a copy of $T$ in $F'$, a contradiction (see Figure~\ref{fig:wovenCaterpillar} right).
		This shows that $\lvert N_u\rvert \leq k$ and, by symmetry,  $\lvert N_v\rvert \leq k$. Hence, $Y_{uv}=\{uw\colon w\in N_u\}\cup \{vw\colon w\in N_v\}$ is the desired edge set.
	\end{proof}

	\section{Non-equivalence results for trees and cliques}\label{sec:nonequivalent}
	
	In this section, we prove each part of Theorem~\ref{thm:non-equivalentpairs} in turn.
	When constructing appropriate distinguishing graphs in our proofs, we will often combine several smaller graphs, which we call building blocks, by identifying some of their vertices or edges. We will assume that, except for the specified intersections, all of these building blocks are disjoint from one another.
	
	\subsection{Proof of Theorem~\ref{thm:non-equivalentpairs}\ref{thm:diameterTrees}}	
	
	Recall that the diameter of a tree $T$, denoted $\text{diam}(T)$, is the length of its longest path.
	Our construction gives $(T,K_t)\not\sim (T,H)$ for each tree $T$ from the following slightly larger class $\mathcal{T}'$ consisting of all trees $T$ that either have odd diameter, or have even diameter and additionally satisfy the following:
	\begin{itemize}
		\item The central vertex of $T$ has at most one neighbor of degree at least three that is contained in a longest path in $T$.
		\item If $T$ is of diameter four, the central vertex is of degree at least three.
	\end{itemize}
	See Figure~\ref{fig:diameterTrees} (left) for an illustration.
	
	Theorem~\ref{thm:non-equivalentpairs}\ref{thm:diameterTrees} is clearly a direct consequence of Theorem~\ref{thm:non-equivalent_trees} below.
	
	\begin{thm}~\label{thm:non-equivalent_trees}
		Let $t\geq 3$, let $H$ be a connected graph, and let $T\in\mathcal{T'}$.
		Then $(T,K_t)\not\sim (T,H)$.
	\end{thm}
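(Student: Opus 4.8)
By Theorem~\ref{thm:NRcliquenumber}, if $\omega(H)\neq t$ then $\max\{\omega(T),\omega(K_t)\}=t\neq\max\{\omega(T),\omega(H)\}$ and so $(T,K_t)\not\sim(T,H)$ at once; hence we may assume $\omega(H)=t$. Since $H$ is connected, contains a copy of $K_t$, and $H\neq K_t$, it has a vertex outside that $K_t$ joined to it, so $K_t\cdot K_2\subseteq H$. Consequently every blue copy of $H$ contains a blue copy of $K_t\cdot K_2$, and every blue copy of $K_t\cdot K_2$ contains a blue $K_t$, giving
\[
\mathcal R(T,H)\subseteq\mathcal R(T,K_t\cdot K_2)\subseteq\mathcal R(T,K_t).
\]
Thus it suffices to exhibit a single graph $F$ with $F\to(T,K_t)$ and $F\not\to(T,K_t\cdot K_2)$; equivalently, a graph that is Ramsey for $(T,K_t)$ but admits a $2$-colouring with no red $T$ in which every blue $K_t$ is an isolated component of the blue graph.

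To build such an $F$ we split $T$ into two rooted pieces at its centre: when $\mathrm{diam}(T)=2\ell+1$ is odd we delete the central edge $x_0y_0$ (the middle edge of every longest path) to obtain rooted trees $(T_x,x_0)$ and $(T_y,y_0)$; when $\mathrm{diam}(T)=2\ell$ is even we delete the central vertex $v$ to obtain the rooted branches $(T_1,z_1),\dots,(T_{\deg(v)},z_{\deg(v)})$. The graph $F$ is assembled from building blocks, glued along prescribed vertices and edges: one or more \emph{target cliques} $A\cong K_t$; for every target, many pairwise disjoint rooted copies of the half‑trees $T_x,T_y$ (resp.\ of the branches $T_i$), attached to $A$ at each of its edges (resp.\ vertices); and \emph{auxiliary blocks} whose role is to stop a target from being "deactivated''. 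The gluing is arranged so that (i) deleting the edge sets of all targets from $F$ leaves a $T$‑free graph (each half‑tree being a proper subtree of $T$), and (ii) $F$ contains a copy of $T$ through each edge of each target, with the central edge (resp.\ vertex) of that copy lying in the target and the two halves running into the attached half‑trees.

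For $F\to(T,K_t)$: let $c$ be a $2$‑colouring of $E(F)$ with no red $T$; we produce a blue $K_t$. If every attached half‑tree is monochromatic red, then no target can carry a red edge, since by (ii) such an edge together with red copies of the two halves at its ends would form a red $T$; hence some target is monochromatic blue. If instead some half‑tree carries a blue edge, the auxiliary blocks (together with the absence of a red $T$) force a blue $K_t$. In either case $c$ is not $(T,K_t)$‑free, so $F\to(T,K_t)$. This is exactly where $T\in\mathcal T'$ is needed: the conditions $\mathrm{diam}(T)\geq 3$, that at most one neighbour of the central vertex lying on a longest path has degree $\geq 3$, and that the central vertex has degree $\geq 3$ when $\mathrm{diam}(T)=4$, are precisely what make the local shape of $T$ at its centre compatible with the "clique with many attached half‑trees'' pattern, so that the red copy of $T$ in (ii) really exists — for trees violating these conditions it cannot be produced, in agreement with Theorem~\ref{thm:equivalentpairs}.

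For $F\not\to(T,K_t\cdot K_2)$: colour every edge inside every target blue and every remaining edge of $F$ red. By (i) the red graph is $T$‑free, so there is no red $T$; the blue graph is a disjoint union of copies of $K_t$, hence contains no copy of $K_t\cdot K_2$. Combining, $\mathcal R(T,H)\subseteq\mathcal R(T,K_t\cdot K_2)\subsetneq\mathcal R(T,K_t)$, so $(T,K_t)\not\sim(T,H)$. The main obstacle is the construction of $F$ together with the verification that $F\to(T,K_t)$: a single target clique is fragile — an adversary can try to redden one of its edges while cheaply spoiling the half‑tree "threats'' — so the auxiliary blocks must be engineered so that any such spoiling is itself only possible at the cost of a blue $K_t$, all while keeping the "all red outside the targets'' colouring $T$‑free. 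Making these demands simultaneously satisfiable is precisely the point at which the restriction $T\in\mathcal T'$ enters, and where the bulk of the case analysis (odd versus even diameter, and the diameter‑$4$ special case) lives.
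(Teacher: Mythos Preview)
Your high-level reduction is correct and matches the paper: reduce to $\omega(H)=t$, hence $K_t\cdot K_2\subseteq H$, and then construct a single $F$ with $F\to(T,K_t)$ but $F\not\to(T,K_t\cdot K_2)$. The gap is that you have not actually constructed $F$. You attach literal half-trees to a target clique and defer the work to unspecified ``auxiliary blocks'' that must somehow convert any blue edge inside a half-tree into a blue $K_t$. You yourself flag this as the main obstacle, and it is: designing such a block is the entire content of the proof. There is a further tension in your plan for $F\not\to(T,K_t\cdot K_2)$: you propose colouring \emph{everything} outside the targets red, but any auxiliary block strong enough to force a blue $K_t$ will typically contain a copy of $T$ when coloured entirely red, so this colouring will not be $(T,K_t\cdot K_2)$-free.

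The paper resolves both issues at once by \emph{not} attaching half-trees at all. Instead it builds a gadget $\Lambda_i$: a complete $k$-ary tree $U_{k,i}$ of depth $i$ in which the $k$ children of each internal vertex are partitioned into $d=\Delta(T)$ disjoint copies of a Ramsey graph $\Gamma$ for $(T,K_{t-1})$ with $\omega(\Gamma)<t$. The key property (Claim~\ref{obs:LambdaRamsey}) is that every $2$-colouring of $\Lambda_i$ yields a red $T$, a blue $K_t$, or a red copy of $U_{d,i}$ rooted at the root. The ``good'' colouring $\Phi_i$ is \emph{not} all-red: the tree edges of $U_{k,i}$ are red (diameter $2i<\mathrm{diam}(T)$, so no red $T$) and the edges inside the copies of $\Gamma$ are blue (no $K_t$ in $\Gamma$, so no blue $K_t$). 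For odd diameter $2r+1$ one takes a $K_t$ with a copy of $\Lambda_r$ rooted at each vertex and the argument is exactly yours; for even diameter $2r$ one needs an additional gadget $C$ built from a vertex-Folkman graph for $K_{t-1}$, which guarantees a red path of length two from the root through a controlled middle vertex, and this is precisely where the structural conditions defining $\mathcal T'$ (degree of the central vertex, degrees of its neighbours on longest paths) are used. Your plan correctly locates where the case analysis lives, but the gadgets $\Lambda_i$ and $C$ are the missing idea.
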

	\begin{proof}
		Consider a tree $T\in\mathcal{T'}$.
		If $\omega(H)\neq t$, then $(T,K_t)\not\sim (T,H)$ by Theorem~\ref{thm:NRcliquenumber}.
		So we assume $K_t\subsetneq H$ for the remainder of the proof.
		We will construct a Ramsey graph for $(T,K_t)$ that is not Ramsey for $(T,K_t\cdot K_2)$ and hence not Ramsey for $(T,H)$.
		The construction differs slightly depending on the parity of the diameter of $T$. We begin by introducing a useful gadget graph.
		\smallskip
		
		Throughout the proof, we let $U_{k,i}$ denote the rooted tree in which every leaf is at distance $i$ from the root and every vertex that is not a leaf has exactly $k$ children.
		Here, the distance between two vertices $x$ and $y$ is the length of a shortest path that has $x$ and $y$ as its endpoints.
		Note that $U_{k,i}$ contains every tree of diameter at most $2i$ and maximum degree at most $k$.
		
		Let $d$ denote the maximum degree of $T$.
		Let $\Gamma$ be a Ramsey graph for $(T,K_{t-1})$ that does not contain a copy of $K_t$,
		which exists by Theorem~\ref{thm:NRcliquenumber}. Write $k=d\lvert V(\Gamma)\rvert$.
		For a positive integer $i$, let $\Lambda_i=\Lambda_i(T,\Gamma)$ denote the graph obtained from a copy of $U_{k,i}$ by adding edges so that, for each non-leaf vertex of $U_{k,i}$, its set of children induces $d$ vertex-disjoint copies of $\Gamma$. 
		We refer to the root of $U_{k,i}$ as the root of $\Lambda_i$.
		Let $\Phi_i=\Phi_i(\Lambda_i)$  be the red/blue-coloring that assigns red to all edges in $U_{k,i}$ and blue to all the other edges, see Figure~\ref{fig:diameterTrees} (middle) for an illustration. Observe that, if $i <\text{diam}(T)$, then $\Phi_i$ 
		is a $(T,K_t)$-free coloring of $E(\Lambda_i)$. We have the following Ramsey property of $\Lambda_i$.
		
		\begin{clm}\label{obs:LambdaRamsey}
			Every red/blue-coloring of $E(\Lambda_i)$ yields a 
			red copy of $T$, a blue copy of $K_t$, or a red copy of $U_{d,i}$ whose root is the root of $\Lambda_i$.
		\end{clm}
		\begin{proof}
			To see why this is true, consider an arbitrary $2$-coloring of $E(\Lambda_i)$ with no red copy of $T$. 
			Then each copy of $\Gamma$ contains a blue copy of $K_{t-1}$.
			If some non-leaf vertex in $U_{k,i}$ has only blue edges to one of the copies of $\Gamma$ formed by its children, then there is a blue copy of $K_t$. Otherwise, every such vertex has a red edge to each of the $d$ copies of $\Gamma$ formed by its children, yielding a copy of $U_{d,i}$ as required.
		\end{proof}
		
		\begin{figure}
			\centering
			\includegraphics{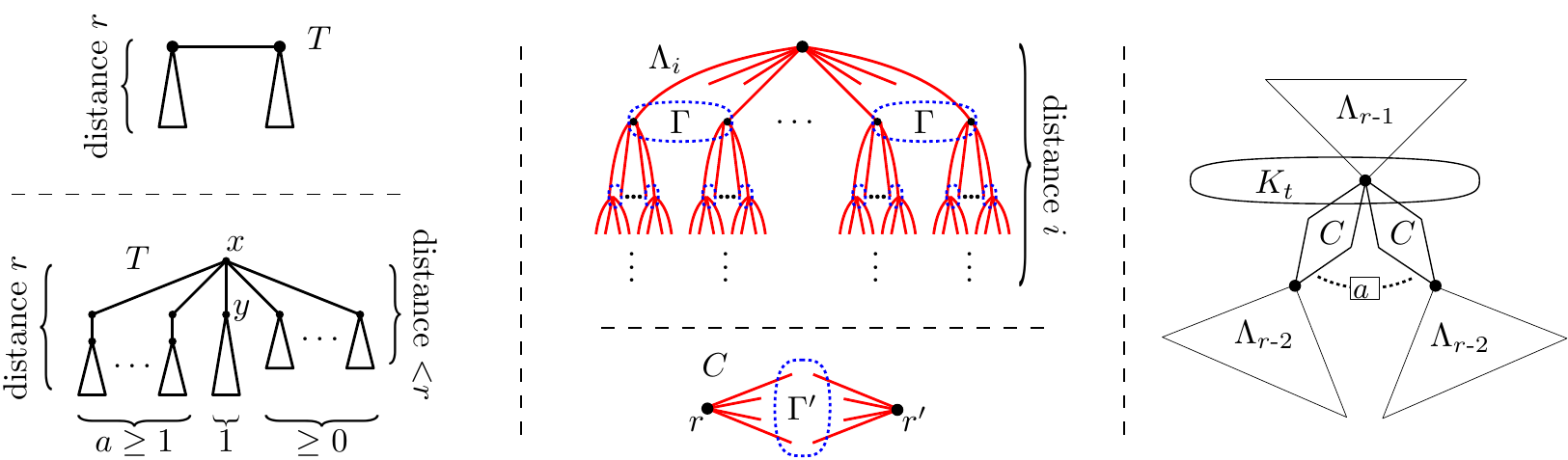}
			
			\caption{Left: An odd diameter tree and an even diameter tree from the class $\mathcal{T'}$. Middle: Graphs $\Lambda_i$ and $C$ with the respective $(T,K_t\cot K_2)$-free colorings. Right: A graph $F$ with $F\to (T,K_t)$ and $F\not\to (T,K_t\cdot K_2)$ in case $T$ is of even diameter.}
			\label{fig:diameterTrees}
		\end{figure}
		
		\medskip
		First consider the case where $T$ is of diameter $2r+1$ for some integer $r$.
		We construct a graph $F$ as follows:
		Start with a copy $K$ of $K_t$.
		For each vertex $u$ of $K$, add a copy of $\Lambda_r$ rooted at $u$ so that  the copies of $\Lambda_r$ are pairwise disjoint. We claim that $F\to (T,K_t)$ and $F\not\to(T,K_t\cdot K_2)$.

		To prove the first claim, we consider an arbitrary $2$-coloring of $E(F)$ with no red copy of $T$. By Claim~\ref{obs:LambdaRamsey}, some copy of $\Lambda_r$ contains a blue copy of $K_t$ or  each vertex of $K$ is the root of a red copy of $U_{d,r}$. If we find a blue copy of $K_t$, we are done, and hence we may assume that the latter happens for every vertex of $K$. If there is a red edge in $K$, then this edge and the red copies of $U_{d,r}$ rooted at its endpoints form a graph which contains a red copy of $T$.
		Otherwise, all edges of $K$ are colored blue, yielding a blue copy of $K_t$.
		This shows  $F\to (T,K_t)$.
		
		To see that $F\not\to(T,K_t\cdot K_2)$, color all edges of $K$ blue and give all copies of $\Lambda_r$ the coloring $\Phi_r$.
		Then $K$ is the only blue copy of $K_t$ and it cannot be extended to a copy of $K_t\cdot K_2$, as all edges leaving $K$ are colored red and all the other blue edges form vertex-disjoint copies of $\Gamma$, which was chosen such that $K_t \nsubseteq \Gamma$.
		The red edges form vertex-disjoint trees of diameter $2r <$ diam$(T)$.
		Hence, there is no red copy of $T$ and no blue copy of $K_t\cdot K_2$ and so $F\not\to(T,K_t\cdot K_2)$.
		
		\medskip
		Now consider the case where $T$ is of diameter $2r$ for some integer $r$.
		In this case the assumptions on $\mathcal{T}'$ imply that at most one neighbor of the central vertex is of degree at least three and is contained in a longest path. Further, if the diameter is exactly four, the central vertex of $T$ is of degree at least three.
		Let $x$ denote the central vertex of $T$, let $y$ denote a neighbor of $x$ in $T$ that is of largest degree among all neighbors of $x$ contained in a longest path in $T$, and let $a$ denote the number of all other neighbors of $x$ contained in a longest path in $T$ (see Figure~\ref{fig:diameterTrees} (left) for an illustration).
		By the assumption on the structure of $T$, all neighbors of $x$ counted by $a$ are of degree exactly two in $T$. As in the previous case, we will use the graphs $\Lambda_i$ as building blocks. We now define the second type of building block that we will use in the construction.
		\smallskip
		
		Let $J$ denote a graph containing no copy of $K_t$ such that, for any $2$-coloring of the \emph{vertices} of $J$, there is a vertex-monochromatic copy of $K_{t-1}$.
		Such a graph exists by~\cite{folkman1970graphs}.
		Let $\Gamma'$ be a Ramsey graph for $(T,J)$ not containing a copy of $K_t$, which exists by Theorem~\ref{thm:NRcliquenumber}, and let $k'=\lvert V(\Gamma')\rvert$.
		Let $C$ denote the graph obtained from a copy of $\Gamma'$ by adding two non-adjacent vertices $r$ and $r'$ and a complete bipartite graph between these two vertices and the vertices of the copy of $\Gamma'$. For convenience we call $r$ the root of $C$.
		See Figure~\ref{fig:diameterTrees} (middle) for an illustration.

		\begin{clm}\label{obs:CRamsey}
			In every red/blue-coloring of $E(C)$, there exists a red copy of $T$, a blue copy of $K_t$, or a red path $rvr'$ for some $v\in V(\Gamma')$.
		\end{clm}
		\begin{proof}
			To see why this is true, consider a red/blue-coloring of $E(C)$ with no red copy of $T$. Then the copy of $\Gamma'$ in $C$ contains a blue copy $J'$ of $J$.
			In particular, each copy of $K_{t-1}$ in $J'$ is blue. Moreover, either each copy of $K_{t-1}$ has a red edge going to each of $r$ and $r'$, or there is a blue copy of $K_t$. In the latter case, we are done, so assume the former. 
			Consider an auxiliary vertex coloring of $J'$ obtained by coloring each vertex $v$ in $J'$ with the color of the edge $rv$. Since there cannot be a vertex-monochromatic blue copy of  $K_{t-1}$, there is a vertex-monochromatic red copy of $K_{t-1}$.
			We assume that there is a red edge between this copy of $K_{t-1}$ and $r'$, and hence we find a red path $rvr'$, where $v\in V(J')$.
		\end{proof}
		
		We now construct a graph $F'$ as follows: Start with a copy $K'$ of $K_t$. For each vertex $u$ of $K'$, add $a$ copies of $C$ and a copy of $\Lambda_{r-1}$ all rooted at $u$.
		Further, for each copy of $C$, add a copy of $\Lambda_{r-2}$ rooted at the copy of $r'$. 
		See Figure~\ref{fig:diameterTrees} (right) for an illustration. We claim that $F'\to (T,K_t)$ and $F'\not\to(T,K_t\cdot K_2)$.

		To prove the first claim we consider an arbitrary $2$-coloring of $E(F')$ with no red copy of $T$. By Claim~\ref{obs:LambdaRamsey}, either there is a blue copy of $K_t$ in some copy of $\Lambda_{r-1}$ or $\Lambda_{r-2}$ or the root of each copy of $\Lambda_{r-1}$ or $\Lambda_{r-2}$ is the root of a red copy of $U_{d,r-1}$ or $U_{d,r-2}$, respectively. Assume the latter is true, since otherwise we are done. By Claim~\ref{obs:CRamsey}, either there exists a blue copy of $K_t$ in some copy of $C$ or each copy of $C$ in $F$ contains a red path of length two connecting the copies of $r$ and $r'$. Again, we may assume that we are in the latter case.  For each copy of $C$, the copy of $r'$ is the root of a copy of $\Lambda_{r-2}$. Now, every vertex of $K'$ is the root of $a$ copies of $C$ and a copy of $\Lambda_{r-1}$. If there is a red edge $e$ in $K$, then there is a red copy of $T$ formed by $e$ and subtrees of the red trees rooted at its endpoints, with $e$ playing the role of the edge $xy$ in $T$. 
		Otherwise all edges of $K$ are colored blue, proving the claim.

		To show that $F'\not\to(T,K_t\cdot K_2)$ we color the edges of $F'$ as follows:
		All edges of $K'$ are colored blue and all mentioned copies of $\Lambda_i$ ($i\in\{r-1,r-2\}$) are colored according to the coloring $\Phi_i$, as defined earlier.
		For each mentioned copy of $C$, all edges in the  copy of $\Gamma'$ are blue and all other edges red.
		Then $K'$ is the only blue copy of $K_t$, as all other blue edges form vertex-disjoint copies of $\Gamma$ or $\Gamma'$ and these graphs do not contain copies of $K_t$.
		Moreover, $K'$ has only red incident edges.
		So there is no blue copy of $K_t\cdot K_2$.
		Now consider the red subgraph of $F'$, and recall that the central vertex of $T$ has $a+1$ neighbors contained in paths of length $2r$.
		If $r>2$, then each longest red path in $F'$ has $2r$ edges and the middle vertex of each such path is in $K'$.
		In particular, the central vertex of each red tree of diameter $2r$ is in $K'$.
		But for each vertex $u\in V(K')$ there are at most $a$ red paths of length $2r$ that meet at $u$ and are otherwise pairwise vertex-disjoint, so there is no red copy of $T$.
		If $r=2$, then for the same reason there is no red copy of $T$ rooted in $K'$.
		In this case there are also red paths of length $2r=4$ whose central vertex is in the neighborhood of $K'$.
		However, these vertices are of degree at most two in the red subgraph and, by assumption, the root of $T$ is of degree at least three in this case.
		This shows that $F\not\to (T,K_t\cdot K_2)$.
	\end{proof}

	\subsection{Proof of Theorem~\ref{thm:non-equivalentpairs}\ref{thm:noexternalcycle}}  We first introduce some definitions. A \emph{cycle of length $s$} in a hypergraph $\mathcal{H}$ is a sequence $e_1,v_1,e_2,v_2\dots,e_s,v_s$ of distinct hyperedges and vertices such that $v_i\in e_i\cap e_{i+1}$ for all $1\leq i\leq s$ where $e_{s+1}=e_1$. The \emph{girth} of a hypergraph $\mathcal{H}$ is the length of a shortest cycle in $\mathcal{H}$ (if no cycle exists, then we say that the girth of $\mathcal{H}$ is infinity). The \emph{chromatic number} of a hypergraph $\mathcal{H}$ is the minimum number $r$ for which there exists an $r$-coloring of the vertex set of $\mathcal{H}$ with no monochromatic edges. 
	A hypergraph is \emph{$d$-degenerate} if every subhypergraph contains a vertex of degree at most $d$, and we define its \emph{degeneracy} to be the smallest $d$ for which this property holds. 
	
	\begin{proof}[Proof of Theorem~\ref{thm:non-equivalentpairs}\ref{thm:noexternalcycle}]
		Suppose that $H$ contains a copy $K$ of $K_t$, and $H$ contains a cycle with vertices from both $V(K)$ and $V(H)\setminus V(K)$.
		Let $g\geq 3$ denote the length of a shortest such cycle in $H$ and let $k=\lvert E(T)\rvert$.
		
		We shall use a hypergraph of high girth and high minimum degree.
		The existence of such a hypergraph follows from a well-known result of Erd\H{o}s and Hajnal~\cite{erdos_chromatic_1966}; we sketch the argument here for the sake of completeness. Erd\H{o}s and Hajnal~\cite{erdos_chromatic_1966} showed that there exists a $t$-uniform hypergraph $\cH'$ with girth at least $g+1$ and chromatic number at least $kt+1$. It is not difficult to show, using a greedy algorithm, that a $d$-degenerate hypergraph has chromatic number at most $d+1$. Hence, the degeneracy of $\cH'$ must be at least $kt$, and thus $\cH'$ must contain a $t$-uniform subhypergraph $\cH$ with girth at least $g+1$ and minimum degree at least $kt$.

		We construct a graph $F$ with vertex set $V(\cH)$ by embedding a copy of $K_t$ into each edge of $\cH$.
		First observe that $F$ does not contain a copy of $H$, since $\cH$ has girth larger than $g$ and hence each cycle of length at most $g$ is fully contained in one of the copies of $K_t$, that is, in one of the hyperedges, and no two copies of $K_t$ in $F$ share an edge.
		In particular $F\not\to(T,H)$.
		Next we shall prove that $F\to(G,K_t)$.
		Consider a $2$-coloring of $E(F)$ without blue copies of $K_t$.
		Then each copy of $K_t$ in $F$ (each hyperedge of $\cH$) contains a red edge.  Since $\cH$ has minimum degree $kt$ and is $t$-uniform, there are at least $v(\mathcal{H})k$ red edges, that is, the average red degree of $F$ is at least $2k$. It follows from a standard greedy argument that the red subgraph of $F$ contains a subgraph of minimum degree at least $k$. By greedily embedding the vertices of $T$ in this subgraph, we find a red copy of $T$.
		Hence $F\to(T,K_t)$ and $(T,K_t)\not\sim (G,H)$.
	\end{proof}
	
	\subsection{Proof of Theorem~\ref{thm:non-equivalentpairs}\ref{thm:connectedG}}
	In order to prove part~\ref{thm:connectedG} of Theorem~\ref{thm:non-equivalentpairs}, we will use a gadget graph known as a determiner. A graph $D$ with a distinguished edge $\beta\in E(D)$ is called \emph{$(G,H,\beta)$-determiner}, if $D\not\to (G,H)$, and in every $(G,H)$-free red/blue-coloring of $E(D)$, the edge $\beta$ is colored red. Moreover, we call such determiner \emph{well-behaved} if it has an $(G,H)$-free coloring in which all edges incident to $\beta$ are blue. Burr, Erd\H{o}s, Faudree, Rousseau, and Schelp~\cite{BEFRS82} showed that well-behaved determiners exist for any pair $(T,K_t)$ when both the tree and the clique have at least three vertices. In fact, their construction satisfies some further properties which we will use in the proof of Theorem~\ref{thm:non-equivalentpairs}\ref{thm:connectedG}. We summarize those in the following proposition.
	
	\begin{prop}[{\cite[Proof of Theorem 8, Lemmas 9 \& 10]{BEFRS82}}]\label{prop:treeDeterminer}
		Let $t\geq 3$ be an integer and $T$ be a tree with at least three vertices.
		There exists a well-behaved $(T,K_t,\beta)$-determiner $D$. Moreover, the graph induced by the endpoints of $\beta$ and the union of their neighborhoods is isomorphic to $K_t$. 
	\end{prop}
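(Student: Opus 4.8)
The plan is to realize $\beta$ as an edge of a designated clique and to force it red by forcing every \emph{other} edge of that clique to be blue. Fix a leaf $\ell$ of $T$ with neighbour $p$, and write $\beta=xy$. I would build $D$ starting from a clique $K\cong K_t$ on the vertex set $\{x,y,z_1,\dots,z_{t-2}\}$, and arrange the rest of the construction so that $x$ and $y$ receive no further edges; then $N_D(x)\cup N_D(y)=V(K)$ induces $K_t$, which is exactly the extra property required in \ref{prop:treeDeterminer}. Note that the vertices $z_i$ are still allowed external edges, so they will serve as the interface to the forcing gadgets. Observe also that, once we show $\beta$ is red in every $(T,K_t)$-free colouring, well-behavedness is automatic: if every good colouring forces all of $xz_i,yz_i$ blue (which is what the construction below guarantees), then \emph{any} good colouring witnesses the well-behaved property, so it suffices to produce one good colouring at all.

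To force the clique edges blue, I would attach to each $z_i$ a gadget $G_i$, sharing with the rest of $D$ only the vertex $z_i$ and pairwise disjoint otherwise, with the following property: in every $(T,K_t)$-free colouring of $D$, the vertex $z_i$ is the apex of a red copy of $T-\ell$ lying inside $G_i$, in which $z_i$ plays the role of $p$ and still has a free slot for the missing leaf $\ell$. Granting this, consider any good colouring and any edge of $K$ incident to $z_i$, say $z_ix$ (the cases $z_iy$ and $z_iz_j$ are identical). If $z_ix$ were red, then, since $x\notin V(G_i)$, appending $x$ as the missing leaf to the red $T-\ell$ rooted at $z_i$ would yield a red copy of $T$, a contradiction. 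Hence $z_ix$ is blue. Running over all $i$, every edge of $K$ except $\beta$ is blue in every good colouring; were $\beta$ blue as well, $K$ would be a blue $K_t$. Therefore $\beta$ is red, which is the determiner property.

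It remains to check $D\not\to(T,K_t)$ and assemble the good colouring. Colour $\beta$ red, colour all other edges of $K$ blue, and colour each $G_i$ by a $(T,K_t)$-free colouring in which $z_i$ does root a red $T-\ell$ (such a colouring is part of the gadget's specification). There is no red $T$: the red copies of $T-\ell$ live in the pairwise disjoint $G_i$, and they cannot be linked through $K$, since the only red edge of $K$ is $\beta=xy$, whose endpoints carry no red subtree, so no red edge leaves any $G_i$ into $K$. There is no blue $K_t$ either: any blue clique meeting $x$ or $y$ stays inside $K$ and so has at most $t-1$ vertices (as $\beta$ is red), while a blue clique avoiding $x,y$ can use the interior of at most one gadget $G_i$ together with the $z_j$, and the $z_j$ with $j\neq i$ are non-adjacent to the interior of $G_i$; since each $G_i$ is $K_t$-free, no blue $K_t$ arises.

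The main obstacle is the construction of the gadget $G_i$ meeting its specification, namely forcing a red $T-\ell$ rooted at $z_i$ \emph{without} ever forcing a full red $T$, while remaining $K_t$-free. This delicate balance is precisely the content of \cite[Lemmas 9 \& 10]{BEFRS82}. One natural way to build it is to mimic the layered construction used elsewhere in this paper: using \ref{thm:NRcliquenumber}, take a Ramsey graph $\Gamma$ for $(T,K_{t-1})$ that contains no $K_t$, and hang copies of $\Gamma$ below $z_i$ so that the absence of a red $T$ propagates red edges downward from $z_i$ (each internal vertex must send a red edge into one of the $\Gamma$-copies below it, else a blue $K_{t-1}$ closes up into a blue $K_t$). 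Choosing the branching and the number of layers just large enough to assemble a red copy of $T-\ell$ rooted at $z_i$, but short of what would be needed to close up a full red $T$, yields $G_i$; organising this cut-off by an induction that peels the leaf $\ell$ from $T$ is what makes the two competing requirements compatible.
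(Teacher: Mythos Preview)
The paper does not give a proof of this proposition; it is quoted from \cite{BEFRS82} and used as a black box. Your sketch is in fact a faithful outline of the construction in that reference: one places $\beta$ inside a copy $K$ of $K_t$, keeps the endpoints of $\beta$ isolated outside $K$ (which immediately yields the neighbourhood property), and attaches to each of the other $t-2$ clique vertices a gadget that forces a red copy of $T-\ell$ rooted there in any $(T,K_t)$-free colouring. The observation that well-behavedness then comes for free, because all edges of $K$ incident to $\beta$ are forced blue anyway, is correct and is how \cite{BEFRS82} proceeds.

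Two small points. First, in your verification that the exhibited colouring has no blue $K_t$ you write ``since each $G_i$ is $K_t$-free''; the gadgets need not be $K_t$-free as graphs, only their chosen colouring must be blue-$K_t$-free, which is what you actually use. Second, your discussion of the gadget construction is honest about where the work lies but also where your sketch becomes hand-wavy. The layered $\Lambda_i$ construction from the present paper does not suffice directly: if one takes enough layers to embed $T-\ell$ rooted at $p$ (which can require depth $\mathrm{diam}(T)-1$), then the canonical colouring $\Phi_i$ already contains a red tree of diameter $2(\mathrm{diam}(T)-1)\geq \mathrm{diam}(T)$, so it may contain $T$. What makes \cite{BEFRS82} work is precisely the inductive ``peel a leaf'' argument you allude to at the end, building the gadget for $T$ out of gadgets for $T-\ell$ rather than out of uniform layers; you have identified the right mechanism, but the cut-off is not as simple as choosing a number of layers.
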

	
	\begin{proof}[Proof of Theorem~\ref{thm:non-equivalentpairs}\ref{thm:connectedG}]
		We may assume that $G\not\subseteq T$, since otherwise $G$ is a tree and we can switch the graphs $G$ and $T$ in the statement. Suppose that the pairs $(T,K_t)$ and $(G,K_t)$ are Ramsey equivalent.
		In order to reach a contradiction, we will construct a graph $F$ which is Ramsey for $(T,K_t)$ but not Ramsey for $(G,K_t)$.
		To do so, first fix a $(T,K_t,\beta)$-determiner $D$, as given by Proposition~\ref{prop:treeDeterminer}. To create $F$, we start with a copy $T_0$ of $T$, and for each edge $e$ of $T_0$ we take a copy $D_e$ of $D$ on a new set of vertices and identify $e$ with the copy of $\beta$ in $D_e$.
		
		We first observe that $F$ is a Ramsey graph for $(T, K_t)$. Indeed, if we assume that $F$ has a $(T,K_t)$-free coloring, then this induces a $(T,K_t)$-free coloring on each copy of $D$, so each copy of $\beta$ needs to be red by the definition of a determiner. But then $T_0$ becomes a red copy of $T$, a contradiction.
		
		It remains to prove that $F$ is not Ramsey for $(G,K_t)$, i.e.,~to find a $(G,K_t)$-free coloring of $E(F)$. For this, fix any edge $e_0\in E(T_0)$.
		We first observe that the graph $F-e_0$ is not Ramsey for $(T,K_t)$ by considering the following coloring: give each copy of $D$ a $(T,K_t)$-free coloring such that its copy of $\beta$ is red (or not colored if $\beta=e_0$) and all edges incident to $\beta$ in $D$ are blue.
		The existence of such a coloring is guaranteed by the fact that $F$ is well-behaved.
		
		By our assumption that $(T,K_t)$ and $(G,K_t)$ are Ramsey equivalent, we conclude that $F - e_0$ is not a Ramsey graph for $(G,K_t)$. Therefore, we can find a $(G, K_t)$-free coloring $c$ of $F-e_0$. We now extend this coloring to $F$ by assigning the color blue to $e_0$.
		If this does not create a blue copy of $K_t$, we have already found the required coloring. 
		So we may assume that this extension leads to a blue copy $K$ of $K_t$. Notice that every copy of $K_t$ in $F$ is fully contained in a copy of the determiner $D$. Then by Proposition~\ref{prop:treeDeterminer} this blue copy of $K_t$ is the graph induced by the endpoints of $e_0$ and the union of their neighborhood in $D_{e_0}$, i.e., it must be contained in the copy $D_{e_0}$ of $D$ and is unique. 
		We now use this information to recolor all other copies of $D-\beta$ in $F$ using the coloring of $E(D_{e_0} - e_0)$; we further color $T_0$ fully red.
		In this new coloring of $E(F)$, there cannot be a blue copy of $K_t$ as there were none in $D_{e_0} - e_0$. 
		Moreover, there cannot be a red copy of $G$, since every copy of $D - \beta$ has a $(G,K_t)$-free coloring, every edge incident to $T_0$ is blue, and $G\not\subseteq T$.
		This is a contradiction to the assumption $F\to (G,K_t)$ and hence $(T,K_t)\not\sim (G,K_t)$.
	\end{proof}

	\section{Concluding remarks and open problems}\label{sec:concluding}
	In this paper we identify a non-trivial infinite family of Ramsey equivalent pairs of connected graphs of the form $(T,K_t) \sim (T,K_t\cdot K_2)$, where $T$ is a non-trivial star or a so-called suitable caterpillar.
	We also prove that $(T,K_t) \not\sim (T,K_t\cdot K_2)$  for a large class of other trees $T$ including all trees of odd diameter.
	It remains open whether for the remaining trees the respective pairs are Ramsey equivalent or not.	
	Our proof actually shows $(G,K_t)\sim (G,K_t\cdot K_2)$ for all so-called woven graphs $G$ and sufficiently large $t$.
	This leads to the following two questions:
	Are there any woven graphs other than the trees mentioned in Theorem~\ref{thm:equivalentpairs}\ref{thm:StarCaterpillar}? Are there non-woven graphs $G$ and integers $t$ with $(G,K_t)\sim (G,K_t\cdot K_2)$?

	\smallskip

	One of the questions that drove the study of Ramsey equivalence is: What graphs $H$ are Ramsey equivalent to the clique $K_t$? This question was addressed in~\cite{bloom2018ramsey,fox2014ramsey,szabo2010minimum}. In particular, it follows from the results of Folkman~\cite{folkman1970graphs} and Nešetřil and Rödl~\cite{NESETRIL1976243} and Fox, Grinshpun, Liebenau, Person, and Szabó~\cite{fox2014ramsey} that there is no connected graph $H \neq K_t$ such that $H\sim K_t$. It is then natural to ask: what about an asymmetric pair of connected graphs?
	
	\begin{question}\label{quest:clique}
		Are there connected graphs $G$ and $H$ and an integer $t$ such that $(G,H)\sim (K_t, K_t)$?
	\end{question}

	Some known results allow us to easily exclude many possible pairs $(G,H)$. For example, the results of Folkman~\cite{folkman1970graphs} and Nešetřil and Rödl~\cite{NESETRIL1976243}, as stated in Theorem~\ref{thm:NRcliquenumber} above, show that, if $\max\{\omega(G), \omega(H)\}\neq t$, then $(G,H)\not\sim (K_t,K_t)$, while the work of Fox, Grinshpun, Liebenau, Person, and Szabó~\cite{fox2014ramsey} shows that we cannot have $\omega(G) = \omega(H) = t$. Thus, we can restrict our attention to pairs $(G,H)$ with $\omega(G) <t$ and $\omega(H) = t$. Combining several results concerning Ramsey properties of the random graph $G(n,p)$~\cite{B17,HST19,mousset2018towards,rodl1993lower,rodl1995threshold}, we can restrict $(G,H)$ even further: namely, we can show that $m_2(G) = m_2(H) = m_2(K_t)$. Using the ideas developed by Savery in~\cite{savery2022chromatic}, we can also prove that the chromatic numbers of the graphs $G$ and $H$ must satisfy either $\chi(G) = t-1$ and $\chi(H) = t+1$, or $\chi(G) = t$ and $H = K_t$. In addition,
	the theory of determiners developed in~\cite{burr1985useofsenders} for 3-connected graphs allows us to conclude that $G$ and $H$ cannot both be 3-connected. 
	It would be very interesting to provide a complete answer to Question~\ref{quest:clique}. 
	
	\smallskip
	Our study focuses on pairs of connected graphs.
	Disconnected graphs have also received some attention in the symmetric setting; the central question here asks which graphs are Ramsey equivalent to a complete graph~\cite{bloom2018ramsey,fox2014ramsey,szabo2010minimum}.
	Similar questions arise in the asymmetric setting, for instance for which graphs $G$ and integers $t$ we have $(G,K_t)\sim (G,K_t+K_{t-1})$, where $K_t+K_{t-1}$ is the disjoint union of $K_t$ and $K_{t-1}$ (this holds in case $G=K_t$ by~\cite{bloom2018ramsey}).

	\bibliographystyle{amsplain}
	\bibliography{biblio}

\providecommand{\bysame}{\leavevmode\hbox to3em{\hrulefill}\thinspace}
\providecommand{\MR}{\relax\ifhmode\unskip\space\fi MR }
\providecommand{\MRhref}[2]{%
  \href{http://www.ams.org/mathscinet-getitem?mr=#1}{#2}
}
\providecommand{\href}[2]{#2}
\begin{thebibliography}{10}

\bibitem{axenovich2017conditions}
M.~Axenovich, J.~Rollin, and T.~Ueckerdt, \emph{Conditions on {R}amsey
  nonequivalence}, J.~Graph Theory \textbf{86} (2017), no.~2, 159--192.

\bibitem{Belck}
H.-B. Belck, \emph{Regul\"{a}re {F}aktoren von {G}raphen}, J. Reine Angew.
  Math. \textbf{188} (1950), 228--252.

\bibitem{bloom2018ramsey}
T.~F. Bloom and A.~Liebenau, \emph{Ramsey equivalence of {$ K_n $} and {$ K_n+
  K_{n-1} $}}, Electron. J. Combin. \textbf{25} (2018), no.~3, P3.4.

\bibitem{BSS05}
M.~Borowiecki, I.~Schiermeyer, and E.~Sidorowicz, \emph{Ramsey
  {$(K_{1,2},K_3)$}-minimal graphs}, Electron. J. Combin. \textbf{12} (2005),
  R20.

\bibitem{B17}
J.~B\"{o}ttcher, \emph{Large-scale structures in random graphs}, Surveys in
  Combinatorics 2017, London Math. Soc. Lecture Note Ser., vol. 440, Cambridge
  Univ. Press, Cambridge, 2017, pp.~87--140.

\bibitem{burr1981starforests}
S.~A. Burr, P.~Erd\H{o}s, R.~J. Faudree, C.~C. Rousseau, and R.~H. Schelp,
  \emph{Ramsey-minimal graphs for star-forests}, Discrete Math. \textbf{33}
  (1981), no.~3, 227--237.

\bibitem{BEFRS82}
\bysame, \emph{Ramsey-minimal graphs for forests}, Discrete Math. \textbf{38}
  (1982), no.~1, 23--32.

\bibitem{burr1976graphs}
S.~A. Burr, P.~Erd\H{o}s, and L.~Lov{\'a}sz, \emph{On graphs of {R}amsey type},
  Ars Combinatoria \textbf{1} (1976), no.~1, 167--190.

\bibitem{burr1985useofsenders}
S.~A. Burr, J.~Ne\v{s}et\v{r}il, and V.~R\"{o}dl, \emph{On the use of senders
  in generalized {R}amsey theory for graphs}, Discrete Math. \textbf{54}
  (1985), no.~1, 1--13.

\bibitem{TreeCompleteRamseyNumber}
V.~Chvátal, \emph{Tree-complete graph ramsey numbers}, J.~Graph Theory
  \textbf{1} (1977), no.~1, 93--93.

\bibitem{clemens2020minimal}
D.~Clemens, A.~Liebenau, and D.~Reding, \emph{On minimal {R}amsey graphs and
  {R}amsey equivalence in multiple colours}, Combin. Probab. Comput.
  \textbf{29} (2020), no.~4, 537--554.

\bibitem{conlon2015recent}
D.~Conlon, J.~Fox, and B.~Sudakov, \emph{Recent developments in graph {R}amsey
  theory.}, Surveys in Combinatorics \textbf{424} (2015), 49--118.

\bibitem{erdos_chromatic_1966}
P.~Erd\H{o}s and A.~Hajnal, \emph{On chromatic number of graphs and
  set-systems}, Acta Math. Hungarica \textbf{17} (1966), no.~1, 61--99.

\bibitem{erdHos1978size}
P.~Erd{\H{o}}s, R.~J. Faudree, C.~C. Rousseau, and R.~H. Schelp, \emph{The size
  {R}amsey number}, Period. Math. Hungar. \textbf{9} (1978), no.~1-2, 145--161.

\bibitem{folkman1970graphs}
J.~Folkman, \emph{Graphs with monochromatic complete subgraphs in every edge
  coloring}, SIAM J. Appl. Math. \textbf{18} (1970), no.~1, 19--24.

\bibitem{fox2014ramsey}
J.~Fox, A.~Grinshpun, A.~Liebenau, Y.~Person, and T.~Szab{\'o}, \emph{What is
  {R}amsey-equivalent to a clique?}, J. Comb. Theory B \textbf{109} (2014),
  120--133.

\bibitem{grinshpun2015some}
A.~Grinshpun, \emph{Some problems in graph {R}amsey theory}, Ph.D. thesis,
  Massachusetts Institute of Technology, 2015.

\bibitem{HST19}
R.~Hancock, K.~Staden, and A.~Treglown, \emph{Independent sets in hypergraphs
  and {R}amsey properties of graphs and the integers}, {SIAM} J. Discret. Math.
  \textbf{33} (2019), no.~1, 153--188.

\bibitem{keevash2021cycle}
P.~Keevash, E.~Long, and J.~Skokan, \emph{Cycle-complete ramsey numbers}, Int.
  Math. Res. Not. IMRN \textbf{2021} (2021), no.~1, 275--300.

\bibitem{luczak1994ramsey}
T.~{\L}uczak, \emph{On {R}amsey minimal graphs}, Electron. J. Combin.
  \textbf{1} (1994), R4.

\bibitem{mousset2018towards}
F.~Mousset, R.~Nenadov, and W.~Samotij, \emph{Towards the
  {K}ohayakawa–{K}reuter conjecture on asymmetric {R}amsey properties},
  Combin. Probab. Comput. \textbf{29} (2020), no.~6, 943–955.

\bibitem{NESETRIL1976243}
J.~Nešetřil and V.~Rödl, \emph{The {R}amsey property for graphs with
  forbidden complete subgraphs}, J. Comb. Theory B \textbf{20} (1976), no.~3,
  243--249.

\bibitem{NiRa98}
T.~Niessen and B.~Randerath, \emph{Regular factors of simple regular graphs,
  and factor spectra}, Discrete Math. \textbf{85} (1998), 89--103.

\bibitem{petersen}
J.~Petersen, \emph{Die {T}heorie der {R}egul\"aren {G}raphen}, Acta. Math.
  \textbf{15} (1891), 193--220.

\bibitem{Ram30}
F.~P. Ramsey, \emph{On a problem in formal logic}, Proc. Lond. Math. Soc.
  \textbf{30} (1930), 264--286.

\bibitem{rodl1993lower}
V.~R{\"o}dl and A.~Ruci\'nski, \emph{Lower bounds on probability thresholds for
  {R}amsey properties}, Combinatorics, Paul Erd\H{o}s is eighty \textbf{1}
  (1993), 317--346.

\bibitem{rodl1995threshold}
V.~R{\"o}dl and A.~Ruci{\'n}ski, \emph{Threshold functions for {R}amsey
  properties}, J. Amer. Math. Soc. \textbf{8} (1995), no.~4, 917--942.

\bibitem{savery2022chromatic}
M.~Savery, \emph{Chromatic number is {R}amsey distinguishing}, J. Graph Theory
  \textbf{99} (2022), no.~1, 152--161.

\bibitem{szabo2010minimum}
T.~Szab{\'o}, P.~Zumstein, and S.~Z{\"u}rcher, \emph{On the minimum degree of
  minimal {R}amsey graphs}, J. Graph Theory \textbf{64} (2010), no.~2,
  150--164.

\bibitem{Tutte52}
W.~T. Tutte, \emph{The factors of graphs}, Canad. J. Math. \textbf{4} (1952),
  314--328.

\end{thebibliography}

\end{document}